\documentclass[12pt, reqno]{amsart}
\usepackage{amsmath, amsthm, amscd, amsfonts, amssymb, graphicx, xcolor, enumerate}
\usepackage{multicol, lipsum, paracol, diagbox, tabularx, multirow, makecell}

\usepackage{dsfont, booktabs}

\usepackage[bookmarksnumbered, colorlinks, plainpages]{hyperref}

\theoremstyle{definition}
\newtheorem{theorem}{Theorem}[section]
\newtheorem{lemma}[theorem]{Lemma}

\newtheorem{corollary}[theorem]{Corollary}
\newtheorem{definition}[theorem]{Definition}

\newtheorem{conjecture}[theorem]{Conjecture}

\theoremstyle{remark}
\newtheorem{remark}[theorem]{Remark}
\numberwithin{equation}{section}

\newcommand{\BC}{\mathbb{C}}

\newcommand{\BF}{\mathbb{F}}

\newcommand{\BK}{\mathbb{K}}

\newcommand{\BR}{\mathbb{R}}

\newcommand{\BZ}{\mathbb{Z}}

\newcommand{\CN}{\mathcal{N}}

\newcommand{\gs}{\geqslant}
\newcommand{\ls}{\leqslant}
\newcommand{\pt}[1]{\left(#1\right)}
\newcommand{\cb}[1]{\left\{#1\right\}}
\newcommand{\jk}[1]{\left\langle#1\right\rangle}
\newcommand{\fk}[1]{\left[#1\right]}
\newcommand{\abs}[1]{\left|#1\right|}

\newcommand{\fC}{\mathfrak{C}}
\newcommand{\fS}{\mathfrak{S}}

\newcommand{\GL}{\mathrm{GL}}

\newcommand{\Cay}{\mathrm{Cay}}

\newcommand{\jz}[4]{{\begin{pmatrix}
		#1&#2\\
		#3&#4
\end{pmatrix}}}

\newcommand{\one}{\mathds{1}}
\newcommand{\lcm}{\mathrm{lcm}}
\newcommand{\Sym}{\mathrm{Sym}}

\begin{document}
	\setcounter{page}{1}

	\centerline{}
	
	\centerline{}

	%------------------------------------------------------------------------------
	
	%Title of the paper
	\title[]{A\lowercase{n} E\lowercase{rd\H{o}s}-K\lowercase{o}-R\lowercase{ado result for some principal series representations}}
	
	%Author names and affiliations
	\author[]{J\lowercase{iaqi} L\lowercase{iao}$^1$ \qquad G\lowercase{uiying} Y\lowercase{an}$^2$$^*$}
	
	%\address{$^{1}$ Department of Mathematics, University of California, San Diego, USA.}
	%\email{\textcolor[rgb]{0.00,0.00,0.84}{first@amcs.org}}
	
	%\address{$^{2}$ Department of Computer Science, Moscow State University, Moscow, Russia}
	%\email{\textcolor[rgb]{0.00,0.00,0.84}{second@ieee.org}}
	
\thanks{State Key Laboratory of Mathematical Sciences, Academy of Mathematics and Systems Science, Chinese Academy of Sciences, Beijing 100190, China \& School of Mathematical Sciences, University of Chinese Academy of Sciences, Beijing 100049, China. Emails: 975497560@qq.com$^1$, yangy@amss.ac.cn$^2$$^*$. Supported by The Key Program of National Natural Science Fund No. 12231018 and National Key R\&D Program of China No. 2023YFA1009600 }

\begin{abstract}

Let $V$ be an irreducible principal series representation of $\GL_2(q)$ satisfying certain conditions. Two subsets $S_1, S_2 \subseteq \GL_2(q)$ are called \emph{cross-$t$-intersecting} if $\dim\cb{v \in V: g_1v = g_2v} \gs t$ for any $\pt{g_1, g_2} \in S_1 \times S_2$. In this paper, we determine $\max\pt{\abs{S_1}\cdot\abs{S_2}}$ where $S_1, S_2 \subseteq \GL_2(q)$ are cross-$1$-intersecting. Our proof combines eigenvalue techniques with the representation theory of $\GL_2(q)$.

\end{abstract} \maketitle
	
\section{Introduction}

The classical Erd\H{o}s-Ko-Rado theorem \cite{MR140419} states that if $n \gs 2k$, an intersecting family of $k$-subsets of $\cb{1, 2, \ldots, n}$ has size at most $\binom{n - 1}{k - 1}$; if equality holds, the family must consist of all $k$-subsets containing a fixed element. We deal with analogues of this result for $\BC$-linear representations of finite groups.

Let $G$ be a finite group. If $\alpha: G \rightarrow \Sym(X)$ is a transitive permutation representation, where $X$ is a finite set, we say that two subsets $S_1, S_2 \subseteq G$ are \emph{cross-$t$-intersecting} (with respect to $\alpha$) if $\abs{\cb{x \in X: \alpha(g_1)(x) = \alpha(g_2)(x)}} \gs t$ for any $(g_1, g_2) \in S_1 \times S_2$. In particular, $S_1$ is said to be \emph{$t$-intersecting} (with respect to $\alpha$) if $S_1 = S_2$. Similarly, if $\beta: G \rightarrow \GL(V)$ is an irreducible linear representation, where $V$ is a finite-dimensional vector space, we say that two subsets $S_1, S_2 \subseteq G$ are \emph{cross-$t$-intersecting} (with respect to $\beta$) if $\dim\cb{v \in V: \beta(g_1)(v) = \beta(g_2)(v)} \gs t$ for any $(g_1, g_2) \in S_1 \times S_2$. In particular, $S_1$ is said to be \emph{$t$-intersecting} (with respect to $\beta$) if $S_1 = S_2$. One can ask, for each representation, what is the maximum possible product of sizes of a pair of cross-$t$-intersecting subsets of $G$.

\subsection{History}

On the intersection problem with respect to permutation representation, Ellis, Friedgut and Pilpel \cite{MR2784326} completely solved the case for $(G, X) = (\fS_n, \cb{1, 2, \ldots, n})$ with $n \gg t$, which was once the longstanding Deza-Frankl conjecture \cite{MR439648}. It was shown in \cite{MR2784326} that, for each fixed $t$, and all sufficiently large $n$, every pair of cross-$t$-intersecting subsets $S_1, S_2 \subseteq \fS_n$ have product of sizes at most $\pt{(n - t)!}^2$ and equality holds if and only if $S_1 = S_2$ are cosets of the stabilizer of a $t$-tuple of distinct points in $\cb{1, 2, \ldots, n}$. Ellis and Lifshitz \cite{MR4484211} use junta method to prove a stronger conclusion for the case $S_1 = S_2 \subseteq \fS_n$. 

On the intersection problem with respect to linear representation, Ernst and Schmidt \cite{MR4600198} partly solved the case for $(G, V) = (\GL_n(q), \BF_q^n)$ with $n \gg t$, which generalizes Meagher and Razafimahatratra's result \cite{MR4521814}. Recall that the \emph{characteristic vector} $\overrightarrow{S}$ of $S \subseteq G$ is a length-$\abs{G}$ real column vector with entries indexed by the elements of $G$, and the $g$-entry of $\overrightarrow{S}$ is $1$ if $g \in S$ and $0$ otherwise. In \cite{MR4600198}, they show that for each fixed $t$, and all sufficiently large $n$, every pair of cross-$t$-intersecting subsets $S_1, S_2 \subseteq \GL_n(q)$ have product of sizes at most $\pt{\prod_{i = t}^{n - 1}\pt{q^n - q^i}}^2$, and if equality holds, then $\overrightarrow{S_1}$ and $\overrightarrow{S_2}$ are linear combinations of the characteristic vectors of cosets of the stabilizers of a $t$-tuple of linearly independent vectors in $\BF_q^n$. Ellis, Kindler and Lifshitz \cite{MR4684861} also use junta method to prove a stronger conclusion for the case $S_1 = S_2 \subseteq \GL_n(q)$. For further  information, we recommend readers refer to a well-written textbook \cite{MR3497070} and two comprehensive surveys \cite{MR4421401, MR3534067}.

\subsection{Our main result}

Fix a generator $\varepsilon$ of $\BF_q^\times$. Define multiplicative characters $\mu_i: \BF_q^\times \rightarrow \BC^\times$ by $\mu_i(\varepsilon) = \exp\pt{2\cdot\pi\cdot\sqrt{-1}\cdot i/(q - 1)}$. Let $V_{m, n}$ be the irreducible principal series representation of $\GL_2(q)$ induced by $\mu_m$ and $\mu_n$ where $m \ne n$, which is one of the four classes of irreducible $\BC$-representations of $\GL_2(q)$ (see \ref{representation_theory}). Our main result is as follow.

%\newpage
\begin{theorem}\label{main_thm}
	
	Fix an odd prime power $q$ and factor $q - 1 = \ell \cdot m$ where $\ell$ is an odd prime with $\ell \nmid m$. Let $\beta: \GL_2(q) \rightarrow \GL(V_{m, 0})$ be the irreducible principal series representation. If two subsets $S_1, S_2 \subseteq \GL_2(q)$ are cross-$1$-intersecting with respect to $\beta$, then $\max_{(S_1, S_2)}\sqrt{\abs{S_1}\cdot\abs{S_2}} = \abs{\GL_2(q)}/\ell$.	
	
\end{theorem}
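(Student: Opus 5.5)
Two elements $g_1,g_2$ fail to be "intersecting" exactly when $\beta(g_1^{-1}g_2)$ has no eigenvalue $1$. So $S_1,S_2$ are cross-$1$-intersecting iff $S_1^{-1}S_2$ avoids $D=\{g: 1 \text{ is not an eigenvalue of } \beta(g)\}$. Since $D$ is closed under conjugation and inversion, the problem becomes a cross-independent-set problem in the normal Cayley graph $\Gamma=\Cay(\GL_2(q),D)$. I will prove the upper bound with the cross version of the ratio (Hoffman) bound and then exhibit a matching construction.

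\textbf{Step 1: describing $D$.} Using the character table of $\GL_2(q)$, I will compute $\dim V^g$ for each conjugacy class. The fixed-space dimension is $\frac{1}{o(g)}\sum_{k}\chi_{m,0}(g^k)$, or equivalently it can be read off from the eigenvalues of $\beta(g)$ obtained from the induced-representation description. Write $V_{m,0}=\mathrm{Ind}_B^G(\mu_m\otimes 1)$. The eigenvalues of $\beta(g)$ are then:
\begin{itemize}
\item for a central element $aI$: the scalar $\mu_m(a)$;
\item for a diagonalizable element $\mathrm{diag}(a,b)$: $\mu_m(a)$ and $\mu_m(b)$ on a $2$-dimensional piece, with the remaining eigenvalues spread uniformly over roots of unity;
\item for elliptic and parabolic classes: analogous computations from the character formula.
\end{itemize}
The conditions $\ell$ odd prime, $q-1=\ell m$, $\ell\nmid m$ should make $\mu_m$ have order exactly $\ell$. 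Then "$1$ is not an eigenvalue" should reduce to a condition of the form "$\det g\notin(\BF_q^\times)^{\ell}$" together with some restrictions on the class type. I expect $D$ to contain the coset-union $\{g:\mu_m(\det g)\ne1\}$ at least on the relevant classes.

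\textbf{Step 2: eigenvalues of $\Gamma$.} For a normal Cayley graph the eigenvalues are $\lambda_\chi=\frac{1}{\chi(1)}\sum_{g\in D}\chi(g)$, one for each irreducible $\chi$. I will compute these class by class with the $\GL_2(q)$ character table. The cross ratio bound (Ellis–Friedgut–Pilpel style) gives
\[\sqrt{|S_1||S_2|}\ls |G|\frac{\lambda_{\max}'}{|D|+\lambda_{\max}'},\]
where $\lambda_{\max}'$ is the largest absolute value of a nontrivial eigenvalue. The goal is that the nontrivial eigenvalue of largest modulus comes from the one-dimensional characters $\mu_m^{j}\circ\det$, where $\lambda=\sum_{g\in D}\mu_m^j(\det g)$. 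If these give $|\lambda'|=|D|/(\ell-1)$, the bound becomes $|G|/\ell$.

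\textbf{Step 3: construction.} Take $S_1=S_2=\{g:\mu_m(\det g)=1\}$, a normal subgroup of index $\ell$. For $h=g_1^{-1}g_2$ in this subgroup I need $\beta(h)$ to have eigenvalue $1$. I will check this class by class via Step 1; plausibly Frobenius reciprocity gives it, since the restriction to this kernel contains the trivial character. This yields $|G|/\ell$.

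\textbf{Main obstacle.} The hard part is bounding the eigenvalues of the non-linear characters, namely the other principal series, the Steinberg characters, and the cuspidal characters. Sums of $\chi$ over $D$ involve character sums over $\mu_m$-twisted sets, and these must stay below $|D|/(\ell-1)$ in modulus. My first attempt is to decompose $\one_D$ in terms of class functions built from $\det$ and eigenvalue conditions, then apply orthogonality of multiplicative characters. This should make most $\lambda_\chi$ vanish or become explicitly small. I will then treat the remaining few families using $q\gs 3$ and $\ell\gs 3$.
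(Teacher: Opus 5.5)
Your plan works, and it differs from the paper in one real respect. The paper never uses the plain graph $\Cay(G,D)$. It splits $D$ into the four families $\fC_1,\dots,\fC_4$ and computes the spectrum of each $\Cay(G,\fC_i)$. It then chooses weights $w_1,\dots,w_4$ by linear programming (with $w_1<0$), so that the trivial eigenvalue is $\ell-1$ and every other eigenvalue has modulus at most $1$.

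Putting the paper's eigenvalue table into your unweighted graph shows that this weighting is unnecessary. Measured in units of $m$, $\Cay(G,D)$ has:
\begin{itemize}
\item trivial eigenvalue $(\ell-1)L$, with $L=q^2+\binom{q+1}{2}(m-1)+q\binom{q}{2}$;
\item eigenvalue $-L$ on every nontrivial $\mu_m^j\circ\det$;
\item eigenvalues $(\ell-1)L'$ and $-L'$ on the Steinberg character and its twists by $\mu_m^j\circ\det$, where $\abs{L'}=\frac{qm(\ell-1)+q-m-1}{2}$;
\item eigenvalue $(\ell-1)qm$ on the $V_{r,s}$ with $m\mid r,s$ and $\ell\mid r+s$, and $-qm$ on those with $m\mid r,s$ and $\ell\nmid r+s$;
\item eigenvalue $0$ on all cuspidal and all remaining characters.
\end{itemize}
For $q=7$, $\ell=3$ the unnormalized values are $896,-448,-64,32,56,-28,0$. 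Since $(\ell-1)\abs{L'}<L$ and $(\ell-1)qm<L$, your ratio bound gives exactly $\abs{G}/\ell$, and the trivial eigenvalue automatically has multiplicity one.

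Your predicted value $-\abs{D}/(\ell-1)$ on the linear characters also has a proof that needs no computation. For $\gcd(k,\abs{G})=1$, the bijection $g\mapsto g^k$ preserves $D$ and multiplies $\log_\varepsilon\det g$ by $k$. Hence $D$ is equidistributed over the $\ell-1$ nontrivial cosets of $H$, given $D\cap H=\emptyset$. So your route replaces the paper's LP step with a direct comparison of a few explicit eigenvalues.

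As written, though, the proposal proves none of this, and two of its heuristics are wrong.

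\textbf{Split semisimple classes.} $D$ is not $G\setminus H$ there. An element $\mathrm{diag}(x,y)$ with $\log_\varepsilon x\not\equiv\log_\varepsilon y\pmod{\ell}$ always fixes a nonzero vector, whatever its determinant. Its orbits on the $q-1$ cosets of $B$ it does not fix have length $o_1$, which is divisible by $\ell$, so the monodromy $\mu_m(x)^{o_1}$ equals $1$. The correct condition for a derangement is $\log_\varepsilon x\equiv\log_\varepsilon y\not\equiv 0\pmod{\ell}$. This restriction is what produces the $\sigma_D$ terms and the factors $m-1$, so it must be built into Step 2.

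\textbf{Step 3.} The Frobenius-reciprocity justification is false. We have $\langle \mathrm{Res}_H V_{m,0},1_H\rangle=\langle V_{m,0},\mathrm{Ind}_H^G 1\rangle=0$. This is because $\mathrm{Ind}_H^G 1=\bigoplus_j \mu_m^j\circ\det$ consists of linear characters, while $V_{m,0}$ is irreducible of dimension $q+1$. So there is no common $H$-fixed vector. That each $h\in H$ individually has eigenvalue $1$ must be checked class by class from the formula for $\eta$, as the paper does; the equidistribution argument above relies on this.

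\textbf{The upper bound itself.} What you call the ``main obstacle'' is the entire content of the upper bound. Until the sums of the Steinberg, principal-series and cuspidal characters over $D$ are actually evaluated (the paper's $\sigma_C$, $\sigma_D$, $\sigma_E^{(1)}$, $\sigma_E^{(2)}$), the inequality $\abs{\lambda_\chi}\ls\abs{D}/(\ell-1)$ remains a hope rather than a proved step.
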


Theorem \ref{main_thm} may be seen as a $\BC$-analog of \cite[Theorem 1.2]{MR4521814}. Let $\log_\varepsilon: \BF_q^\times \rightarrow \BZ/(q - 1)\BZ$ be the discrete logarithm. We therefore pose the following conjecture.

\begin{conjecture}
	
	The hypothesis are the same as in Theorem \ref{main_thm}. If two subsets $S_1, S_2 \subseteq \GL_2(q)$ are cross-$1$-intersecting with respect to $\beta$ as well as $\sqrt{\abs{S_1}\cdot\abs{S_2}} = \abs{\GL_2(q)}/\ell$, then $S_1 = S_2 = H:= \cb{g \in \GL_2(q) \;|\;  \log_\varepsilon(\det(g)) \equiv 0 \pmod{\ell}}$.		
	
\end{conjecture}

\subsection{Structure of the paper}

Section \ref{Notation} collects almost all the notation that appears in this paper and section \ref{Background} provides the background from spectral graph theory. In section \ref{Proof} we prove Theorem \ref{main_thm}.

\newpage
\section{Notation}\label{Notation}

\[\begin{tabular}{p{2.3cm}p{10.7cm}}
	\toprule[0.1em]
	
	Symbol & Meaning \\
	
	\midrule[0.1em]
	
	%$p$ & an odd prime \\
	
	$q$  & an odd prime power  \\
	
	$\ell$ & an odd prime factor of $q - 1$ with multiplicity one \\
	
	$\BF_q$  & a finite field with $q$ elements  \\
	
	$\varepsilon$ & a fixed generator of $\BF_q^\times$ \\
	
	$\BK_q$  & $\BF_q\fk{\sqrt{\varepsilon}}$  \\
	
	$\omega$ & a fixed generator of $\BK_q^\times$ with $\omega^{q + 1} = \varepsilon$ \\
	
	$\log_\varepsilon$  & the discrete logarithm,  $\log_\varepsilon: \BF_q^\times \rightarrow \BZ/(q - 1)\BZ$  \\
	
	$\log_\omega$ & the discrete logarithm,  $\log_\omega: \BK_q^\times \rightarrow \BZ/(q^2 - 1)\BZ$  \\
	
	$\CN$ & the norm map $\BK_q \rightarrow \BF_q$, $\CN(x) = x^q \cdot x = x^{q + 1}$ \\
	
	$\GL_2(q)$ & the general linear group over $\BF_q$ of degree two \\
	
	$B$ & the subgroup of $2 \times 2$ upper triangular invertible matrices \\
	
	$H$ & $\cb{g \in \GL_2(q) \;|\; \log_\varepsilon(\det(g)) \equiv 0 \pmod{\ell}}$ \\
	
	$\Theta$ & a fixed embedding $\BK_q^\times \hookrightarrow \GL_2(q)$, $\Theta(x + y\sqrt{\varepsilon}) = \jz{x}{y}{y\varepsilon}{x}$ \\
	
	$\zeta$  & $\exp\pt{{2\cdot\pi\cdot\sqrt{-1}}/\pt{q - 1}}$  \\
	
	$\mu_i$ & a homomorphism $\BF_q^\times \rightarrow \BC^\times$, defined by $\mu_i(\varepsilon^j) = \zeta^{i \cdot j}$ \\
	
	$\xi$  & $\exp\pt{{2\cdot\pi\cdot\sqrt{-1}}/\pt{q^2 - 1}}$  \\
	
	$\lambda_i$ & a homomorphism $\BK_q^\times \rightarrow \BC^\times$, defined by $\lambda_i(\omega^j) = \xi^{i \cdot j}$ \\
	
	$V_{m, n}$ & $\cb{f: \GL_2(q) \rightarrow \BC \;|\;  f\pt{\jz{a}{b}{}{d}x} = \mu_m(a)\cdot\mu_n(d)\cdot f(x)}$ \\
	
	$\one$ & $\one(x) = 1$ if and only if $x = 1$, otherwise $\one(x) = 0$ \\
	
	$\eta(g)$ & $\dim\cb{v \in V \;|\; gv = v}$ \\
	
	$\gcd$  & greatest common divisor  \\
	
	$\lcm$ & least common multiple \\
	
	$o_1$  & $o_1(x, y) := \pt{q - 1}/{\gcd\pt{\log_\varepsilon(x/y), q - 1}}$ where $x, y \in \BF_q^\times$  \\
	
	$o_2$ & $o_2(z) := {\log_\omega(z)}/{\gcd\pt{\log_\omega(z), q + 1}}$ where $z \in \BK_q^\times$ \\
	
	$c_\square$ & conjugacy classes, see Lemma \ref{conjugacy_class} \\
	
	$\fC_\square$ & conjugacy classes of derangements, see Definition \ref{derangement_conjugacy_class} \\
	
	$\sigma_\square$ & character-sums, see Definition \ref{derangement_conjugacy_class} \\
	
	$\jk{x, f}$ & the value of the function $f$ at $x$ \\
	
	$\binom{X}{k}$ & the family of $k$-element subsets of $X$ \\
	
	$g_1 {\sim} g_2$ & $g_1$ and $g_2$ are conjugate \\
	\bottomrule[0.1em]
\end{tabular}\]

\newpage
\section{Background}\label{Background}

\subsection{Representation theory}\label{representation_theory}

Let $G$ be a finite group. A \emph{$\BC$-representation} of $G$ is a pair $(\rho, V)$, where $V$ is a finite-dimensional $\BC$-vector space and $\rho: G \rightarrow \GL(V)$ is a homomorphism. An \emph{equivalence} between two representations $(\rho, V)$ and $(\rho', V')$ is a linear isomorphism $\psi: V \rightarrow V'$ such that $\psi(\rho(g)(v)) = \rho'(g)(\psi(v))$ for all $g \in G$ and $v \in V$. The representation $(\rho, V)$ is said to be \emph{irreducible} if there is no proper subspace of $V$ which is $\rho(g)$-invariant for all $g \in G$. There are only finitely many equivalence classes of irreducible $\BC$-representations of $G$. The \emph{character} $\chi_\rho$ of $\rho$ is the map defined by $\chi_\rho(g) = \mathrm{Tr}(\rho(g))$ where $\mathrm{Tr}$ denotes the trace. For more background, the readers may consult \cite{MR1878556, MR1824028, MR450380}.

$\GL_2(q)$ has four types of irreducible $\BC$-representations, namely the \textbf{O}ne-dimensional representations, the \textbf{S}pecial representations, the \textbf{P}rincipal series representations $V_{m, n}$, and the \textbf{W}eil representations. In particular, for $V_{m, n}$, the linear action of $\GL_2(q)$ on $V_{m, n}$ is the right regular representation, namely defined by $\jk{g_1, g_2(f)} := \jk{g_1g_2, f}$ where $g_1, g_2 \in \GL_2(q)$ and $f \in V_{m, n}$. For more background, the readers may consult \cite{MR1431508}.

\subsection{Cayley Graphs}

Let $X \subseteq G$ is inverse-closed, the Cayley graph $\Cay(G, X)$ is the graph with vertices $G$ and edges $\cb{\cb{u, v} \in \binom{G}{2}: uv^{-1} \in X}$.

\begin{theorem}[Babai \cite{MR546860}; Diaconis and Shahshahani \cite{MR626813}; Roichiman \cite{MR1400314}]\label{babai}
	
	Let $G$ be a finite group, let $X \subseteq G$ be inverse-closed and conjugation-invariant, and let $\Cay(G, X)$ be the Cayley graph. The eigenvalues of the Cayley graph $\Cay(G, X)$ are given by (where $\rho$ ranges over all irreducible $\BC$-representations of $G$)
	\[\theta_\rho = \frac{1}{\dim\rho}\cdot\sum_{x \in X}\chi_\rho(x) \text{ with multiplicity } \sum\limits_{\rho': \theta_{\rho'} = \theta_\rho}(\dim\rho')^2.\]

\end{theorem}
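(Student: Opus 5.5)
The plan is to realize the adjacency matrix of $\Cay(G, X)$ as the action of a central element of the group algebra $\BC[G]$, and then read off its eigenvalues from the decomposition of the regular representation via Schur's lemma.

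First, identify $\BC^G$ with $\BC[G]$ and let $L$ denote the left regular representation, $(L(x)f)(u) = f(x^{-1}u)$. By definition $\{u, v\}$ is an edge exactly when $u = xv$ for some $x \in X$. Hence the adjacency operator is
\[(Af)(u) = \sum_{v:\, uv^{-1} \in X} f(v) = \sum_{x \in X} f(x^{-1}u),\]
that is, $A = \sum_{x \in X} L(x) = L(z)$ with $z := \sum_{x \in X} x \in \BC[G]$. Inverse-closedness of $X$ makes $A$ a symmetric matrix, which is consistent with the graph being undirected. Since $X$ is conjugation-invariant, $gzg^{-1} = z$ for all $g \in G$, so $z$ lies in the centre of $\BC[G]$. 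Consequently $L(z)$ commutes with every $L(g)$.

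Next, use the standard decomposition of the regular representation,
\[\BC[G] \cong \bigoplus_{\rho} \rho^{\oplus \dim\rho},\]
where $\rho$ runs over the irreducible $\BC$-representations. On each copy of $\rho$, the operator $\rho(z) = \sum_{x \in X}\rho(x)$ commutes with $\rho(G)$, so by Schur's lemma it is a scalar $\theta_\rho \cdot \mathrm{Id}$. Taking traces gives
\[\theta_\rho \dim\rho = \sum_{x \in X}\chi_\rho(x),\]
which is exactly the stated formula for $\theta_\rho$. In particular $A$ is diagonalizable, with eigenspace decomposition refining the isotypic decomposition.

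For the multiplicities, the $\rho$-isotypic component has dimension $(\dim\rho)^2$, and $A$ acts on it by the scalar $\theta_\rho$. Distinct $\rho$ may share the same eigenvalue, so the multiplicity of $\theta_\rho$ is the sum of $(\dim\rho')^2$ over all $\rho'$ with $\theta_{\rho'} = \theta_\rho$. As a consistency check, these multiplicities add up to $\sum_\rho (\dim\rho)^2 = \abs{G}$.

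No step is genuinely hard. The only point needing care is the bookkeeping of left versus right translations, so that $A$ matches the convention $uv^{-1} \in X$. Under the right-translation convention the same argument goes through, because $z$ is central.
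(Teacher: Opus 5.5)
Your proof is correct. The paper gives no proof of its own here; it only quotes the result from Babai, Diaconis--Shahshahani and Roichman. Your argument is the standard proof of this fact: the class sum $z=\sum_{x\in X}x$ is central, Schur's lemma applies to each irreducible constituent of the regular representation, and the $\rho$-isotypic component has dimension $(\dim\rho)^2$.

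Two small corrections:
\begin{enumerate}
\item The direction of ``refining'' is reversed. The isotypic decomposition refines the eigenspace decomposition: each eigenspace is a sum of isotypic components. That is what your multiplicity count actually uses.
\item The identity $A=\sum_{x\in X}L(x)$ tacitly assumes $1\notin X$. The paper takes edges from $\binom{G}{2}$, so there are no loops, and if $1\in X$ the true eigenvalues would be $\theta_\rho-1$. This is harmless for the paper, since the derangement classes $\fC_i$ never contain the identity.
\end{enumerate}

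Your formulation also gives slightly more than the statement. Every central element acts by scalars on the same isotypic components, so the adjacency matrices of $\Cay(G,\fC_1),\dots,\Cay(G,\fC_4)$ are simultaneously diagonalized. Hence the eigenvalues of the weighted combination $A_G=m^{-1}\sum_i w_i\Cay(G,\fC_i)$ are the corresponding combinations, computed representation by representation. This is exactly what the paper uses in its final step, even though the theorem as stated covers only a single unweighted Cayley graph.
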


\subsection{Hoffman’s bound}

A weighted adjacency matrix $A_\Gamma$ for a graph $\Gamma$ is a real symmetric matrix, with zeros on the main diagonal, in which rows and columns are indexed by the vertices and the $(i, j)$-entry is $0$ if $i\not\sim j$.

\begin{theorem}[Hoffman \cite{MR284373}]\label{Hoffman}
	
	Let $\Gamma$ be a graph with vertex-set $V$, and let $A_\Gamma$ be a weighted adjacency matrix for $\Gamma$ with constant row sum. Let $\theta_1, \theta_2, \ldots, \theta_n$ be eigenvalues of $A_\Gamma$ ordered by descending absolute value. Let $S_1, S_2 \subseteq V$ such that there are no edges of $\Gamma$ between $S_1$ and $S_2$. Then
	\[\sqrt{\frac{\abs{S_1}}{\abs{V}}\cdot\frac{\abs{S_2}}{\abs{V}}} \ls \frac{\abs{\theta_2}}{\theta_1 + \abs{\theta_2}}.\]
	
\end{theorem}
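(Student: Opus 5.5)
The plan is the standard spectral argument, run on the two characteristic vectors at once. Put $n=\abs{V}$, $\alpha=\abs{S_1}/n$ and $\beta=\abs{S_2}/n$. Let $k$ be the constant row sum of $A_\Gamma$. Then $A_\Gamma\mathbf{1}=k\mathbf{1}$, so $k$ is an eigenvalue with eigenvector $\mathbf{1}$. I read the hypothesis, as the intended application does, as saying that this eigenvalue is $\theta_1$ with $\theta_1=k>0$. Every eigenvalue belonging to the orthogonal complement $\mathbf{1}^\perp$ then has absolute value at most $\abs{\theta_2}$.

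Since $A_\Gamma$ is real symmetric, it preserves $\mathbf{1}^\perp$. On that subspace it has an orthonormal eigenbasis, and Cauchy--Schwarz in that basis gives $\abs{x^TA_\Gamma y}\ls\abs{\theta_2}\,\|x\|\,\|y\|$ for all $x,y\in\mathbf{1}^\perp$.

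Decompose the characteristic vectors as $\overrightarrow{S_1}=\alpha\mathbf{1}+x$ and $\overrightarrow{S_2}=\beta\mathbf{1}+y$, where $x,y\perp\mathbf{1}$. A direct computation gives $\|x\|^2=n\alpha(1-\alpha)$ and $\|y\|^2=n\beta(1-\beta)$.

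Every entry of $A_\Gamma$ indexed by a pair in $S_1\times S_2$ is zero. For pairs that are non-adjacent this holds by definition of a weighted adjacency matrix; for common vertices it holds because the diagonal is zero. Hence $\overrightarrow{S_1}^TA_\Gamma\overrightarrow{S_2}=0$. Expanding, the cross terms vanish because $A_\Gamma\mathbf{1}=k\mathbf{1}$ and $x,y\perp\mathbf{1}$, which leaves
\[0=\alpha\beta kn+x^TA_\Gamma y.\]
Combining this with the bound on $\abs{x^TA_\Gamma y}$ yields
\[k\alpha\beta\ls\abs{\theta_2}\sqrt{\alpha\beta(1-\alpha)(1-\beta)}.\]

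To finish, cancel $\sqrt{\alpha\beta}$; the case $\alpha\beta=0$ is trivial. Then apply AM--GM in the form
\[(1-\alpha)(1-\beta)=1-(\alpha+\beta)+\alpha\beta\ls\pt{1-\sqrt{\alpha\beta}}^2,\]
which holds because $\alpha+\beta\gs2\sqrt{\alpha\beta}$ and $\sqrt{\alpha\beta}\ls1$. This gives $k\sqrt{\alpha\beta}\ls\abs{\theta_2}\pt{1-\sqrt{\alpha\beta}}$. Rearranging produces exactly $\sqrt{\alpha\beta}\ls\abs{\theta_2}/(\theta_1+\abs{\theta_2})$.

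The algebra is routine. The only delicate point is the step I expect to need the most care: identifying the constant-row-sum eigenvalue $k$ with $\theta_1$ and ensuring that all eigenvalues on $\mathbf{1}^\perp$ are bounded by $\abs{\theta_2}$. If $-k$ were also an eigenvalue, for instance when $\Gamma$ is bipartite, then $\abs{\theta_2}=k$ and the inequality reduces to the trivial $\sqrt{\alpha\beta}\ls\frac12$ only after noting this case separately. I would therefore state explicitly that the bound is applied with $\theta_1=k$ and $\theta_2$ the largest-modulus eigenvalue on $\mathbf{1}^\perp$.
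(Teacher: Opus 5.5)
Your proof is correct. It is the standard spectral proof of the cross-intersecting form of Hoffman's ratio bound. Each step checks out: the decomposition $\overrightarrow{S_i}=\alpha\mathbf{1}+x$, the vanishing of $\overrightarrow{S_1}^{T}A_\Gamma\overrightarrow{S_2}$ (with the zero diagonal handling $S_1\cap S_2$), the Cauchy--Schwarz bound on $\mathbf{1}^\perp$, and the AM--GM step. The paper gives no proof of its own; it simply cites Hoffman, so there is no alternative route to compare against.

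Your insistence that $\theta_1$ be the row sum $k>0$ is genuinely necessary, not cosmetic, because the statement read literally is false. Take $K_n$ ($n\gs 3$) with every edge weighted $-1$. Its eigenvalues are $1-n$ (on $\mathbf{1}$) and $1$, so $\theta_1=1-n$, $\theta_2=1$, and the right-hand side is negative. Yet $S_1=S_2=\{v\}$ has no edges between them. In the paper's application $\theta_1=\ell-1$ is the trivial-character (row-sum) eigenvalue, so your reading is the intended one.

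One small correction to your last paragraph. The case where $-k$ is also an eigenvalue needs no separate treatment. Also, $\sqrt{\alpha\beta}\ls\frac12$ is not automatic once negative weights are allowed; it is exactly what your main argument yields when $\abs{\theta_2}=k$.

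Your argument in fact proves slightly more than stated. For any $k>0$, the bound holds with $\abs{\theta_2}$ replaced by the largest modulus $\tau$ of an eigenvalue on $\mathbf{1}^\perp$, and it does not need $k\gs\tau$.
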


\newpage
\section{Proof of Theorem \ref{main_thm}}\label{Proof}

\subsection{Strategy}

Note that $\dim\cb{v \in V: g_1v = g_2v} = \dim\cb{v \in V: g_2^{-1}g_1v = v}$, thus we define $\eta(g) := \dim\cb{v \in V: gv = v}$. With this terminology, two subsets $S_1, S_2 \subseteq G$ are cross-$t$-intersecting if and only if $\eta(g_2^{-1}g_1) \gs t$ for any $(g_1, g_2) \in S_1 \times S_2$. If we construct the Cayley graph $\Cay(G, X)$ with $X = \cb{g \in G: \eta(g) < t}$, then for two subsets $S_1, S_2 \subseteq G$, they are cross-$t$-intersecting if and only if no edges of $\Cay(G, X)$ between $S_1$ and $S_2$. (For now, let $A_G$ denote the weighted adjacency matrix of $\Cay(G, X)$.) This observation allows us, with the help of Theorem \ref{Hoffman}, to use the eigenvalues of $A_G$ to give an upper bound for $\abs{S_1}\cdot\abs{S_2}$, and Theorem \ref{babai} will be useful in calculating the eigenvalues of $A_G$. It is worth noting that the condition for applying Theorem \ref{Hoffman} is somewhat strict, namely $\theta_1$ and $\abs{\theta_2}$ should be distinct. In fact, we can ensure this by letting the multiplicity of $\theta_1$ as an eigenvalue of $A_G$ is exactly $1$.

Hence, the proof of Theorem \ref{main_thm} is divided into three steps. Step 1: to find the derangements, which is the generating set $X$ for the Cayley graph $\Cay(G, X)$. Step 2: to use Theorem \ref{babai} to calculate the eigenvalues of $A_G$. Step 3: to run a linear programming such that the multiplicity of $\theta_1$ is exactly $1$.

%From now on, recall that
%\[V_{m, n} = \cb{f \in \BC\fk{\GL_2(q)}: f\pt{\jz{a}{b}{}{d}x} = \mu_m(a)\cdot\mu_n(d)\cdot f(x)}\] 
%and $\eta(g) = \dim_\BC\cb{\varphi \in V_{m, n}: g.\varphi = \varphi}$.

\subsection{Find the derangements}

Note that $\eta$ is a class function, so we only need to compute the value of $\eta$ for one representative from each conjugacy class of $\GL_2(q)$. Recall that $\varepsilon$ is a fixed generator of $\BF_q^\times$, $\BK_q = \BF_q\fk{\sqrt{\varepsilon}}$ and $\omega$ is a fixed generator of $\BK_q^\times$ with $\CN\omega = \varepsilon$. Also recall that $\Theta: \BK_q^\times \hookrightarrow \GL_2(q)$ defined by $\Theta(x + y\sqrt{\varepsilon}) = \jz{x}{y}{y\varepsilon}{x}$. 

\begin{lemma}[Table 12.4 in \cite{MR1878556}]\label{conjugacy_class}
	The four conjugacy classes of $\GL_2(q)$ are shown in the table below.
	
	\[\begin{tabular}{p{5.5cm}p{3.15cm}p{4.35cm}}
		\toprule[0.1em]
		
		Class & Number of classes & Number of elements in it \\
		
		\midrule[0.1em]
		
		$c_1(x) := \jz{x}{}{}{x}$ & $q - 1$ & $1$ \\
		
		$c_2(x) := \jz{x}{}{1}{x}$ & $q - 1$ & $q^2 - 1$ \\
		
		$c_3(x, y) := \jz{x}{}{}{y}$ with $x \ne y$ & $\binom{q - 1}{2}$ & $q^2 + q$ \\
		
		$c_4(z) := \Theta(z)$ with $z \in {\BK_q^\times} \setminus \BF_q^\times$ & $\binom{q}{2}$ & $q^2 - q$ \\
		
		\bottomrule[0.1em]
	\end{tabular}\]
	
\end{lemma}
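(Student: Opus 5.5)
The plan is to classify elements of $\GL_2(q)$ by their characteristic polynomial $t^2 - \mathrm{tr}(g)\,t + \det(g)$ and use rational canonical form. I will split into three cases according to how this polynomial factors over $\BF_q$. The class sizes will then come from orbit--stabilizer, by computing centralizers. Throughout I use $\abs{\GL_2(q)} = (q^2-1)(q^2-q)$.

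\emph{Case 1: repeated root $x \in \BF_q^\times$.} Either $g = xI$, which is central and gives the classes $c_1(x)$ of size $1$, or $g - xI$ is a nonzero nilpotent. In the latter case there is a vector $v$ with $(g-xI)v \neq 0$. In the basis $\{v, (g-xI)v\}$ the matrix of $g$ is $\jz{x}{}{1}{x}$, so $g \sim c_2(x)$. The centralizer of $c_2(x)$ equals the centralizer of the nilpotent $N = \jz{0}{}{1}{0}$. A direct computation shows it is $\{aI + bN : a \neq 0\}$, of order $q(q-1)$. Hence the class has size $(q^2-1)(q^2-q)/(q(q-1)) = q^2-1$. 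This gives $q-1$ classes of each of these two types.

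\emph{Case 2: distinct roots $x \neq y$ in $\BF_q^\times$.} Here $g$ is diagonalizable, so $g \sim c_3(x,y)$. Since $c_3(x,y) \sim c_3(y,x)$ via the permutation matrix, the classes correspond to unordered pairs, giving $\binom{q-1}{2}$ of them. The centralizer of a diagonal matrix with distinct entries is the diagonal torus of order $(q-1)^2$, so the class size is $q(q+1)$.

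\emph{Case 3: irreducible characteristic polynomial.} The roots are $z, z^q \in \BK_q \setminus \BF_q$. Since $\Theta$ extends to an $\BF_q$-algebra embedding $\BK_q \hookrightarrow M_2(\BF_q)$ (the regular representation of $\BK_q$ on itself in the basis $1, \sqrt{\varepsilon}$), $\Theta(z)$ satisfies the minimal polynomial of $z$. So $\Theta(z)$ has characteristic polynomial with roots $z, z^q$. Any two matrices with the same irreducible characteristic polynomial are cyclic, hence both conjugate to the companion matrix. Therefore $g \sim \Theta(z)$, and $\Theta(z) \sim \Theta(z^q)$. This yields $(q^2-q)/2 = \binom{q}{2}$ classes.

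For the class size I need the centralizer of $\Theta(z)$. A matrix commuting with $\Theta(z)$ commutes with $\BF_q[\Theta(z)] = \Theta(\BK_q)$. So it is a $\BK_q$-linear endomorphism of $\BF_q^2$, which is a $1$-dimensional $\BK_q$-vector space, and is therefore multiplication by an element of $\BK_q$. The centralizer is thus $\Theta(\BK_q^\times)$, of order $q^2-1$, and the class size is $q^2-q$.

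I expect this centralizer identification in Case 3 to be the only real obstacle; the $\BK_q$-linearity argument above is how I plan to handle it. Finally, as a sanity check that no class has been missed, I will verify
\[(q-1) + (q-1)(q^2-1) + \tbinom{q-1}{2}(q^2+q) + \tbinom{q}{2}(q^2-q) = (q^2-1)(q^2-q).\]
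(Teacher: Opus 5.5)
Your proof is correct. The paper gives no argument of its own here: it imports the table from the cited textbook (Table 12.4), so your proposal replaces that citation rather than varying a proof in the paper.

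Your route is the standard derivation:
\begin{itemize}
\item rational canonical form to produce representatives;
\item the characteristic polynomial as a conjugacy invariant to count classes;
\item orbit--stabilizer with explicit centralizers of orders $q(q-1)$, $(q-1)^2$ and $q^2-1$ to get the class sizes.
\end{itemize}
The citation is shorter. Your argument is self-contained, and it checks directly that the paper's own normalizations are genuine class representatives. These are the lower-triangular Jordan block $c_2(x)$ and the specific embedding $\Theta$; a table citation only supplies them up to a translation of conventions. On your parenthetical: with column vectors, $\Theta$ is the transpose of the regular representation in the basis $1,\sqrt{\varepsilon}$. This is harmless, since $\BK_q$ is commutative and multiplicativity of $\Theta$ is a one-line check anyway.

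When writing it up, make explicit three routine points you use tacitly:
\begin{itemize}
\item The case split is exhaustive because an irreducible quadratic over $\BF_q$ has distinct roots $z \neq z^q$. Hence a repeated root lies in $\BF_q$, and it is nonzero since $\det g \neq 0$.
\item Different parameters give different classes, up to the swaps $x \leftrightarrow y$ and $z \leftrightarrow z^q$, because the characteristic polynomial is a class invariant. This is what turns your counts $q-1$, $\binom{q-1}{2}$ and $(q^2-q)/2$ (Frobenius orbits on the $q^2-q$ elements of $\BK_q\setminus\BF_q$) into counts of distinct classes.
\item In Case 1, the vectors $v$ and $(g-xI)v$ are linearly independent because $(g-xI)^2=0$ by Cayley--Hamilton.
\end{itemize}
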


\begin{remark}
	
	$c_3(x, y) \sim c_3(y, x)$ and $c_4(z) \sim c_4(z^q)$.	
	
\end{remark}

Let $B := \cb{\jz{a}{b}{}{d} \in \GL_2(q)}$ and we have the index $\fk{\GL_2(q): B} = q + 1$. A $(q + 1)$-subset $T = \cb{x_1, \ldots, x_{q + 1}} \subseteq \GL_2(q)$ is said to be a \emph{$B$-transversal} if $\GL_2(q) = \biguplus_{i = 1}^{q + 1}Bx_i$. Let
\[T_1 = \cb{\jz{1}{}{x}{1}: x \in \BF_q} \cup \cb{\jz{}{1}{1}{}} \text{ and } T_2 = \cb{\Theta(\omega^i): 0 \ls i \ls q}.\]

\begin{lemma}
	
	$T_1$ and $T_2$ are both $B$-transversals of $\GL_2(q)$.	
	
\end{lemma}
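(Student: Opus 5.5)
To prove that $T_1$ and $T_2$ are $B$-transversals, it suffices for each $T_i$ to show that its $q+1$ elements lie in pairwise distinct right cosets of $B$. Since $\fk{\GL_2(q): B} = q + 1$ and $\abs{T_i} = q+1$, distinctness forces the cosets $Bx$, $x \in T_i$, to exhaust $\GL_2(q)$ disjointly. The criterion I will use is that $Bx = By$ if and only if $xy^{-1} \in B$, i.e. the $(2,1)$-entry of $xy^{-1}$ vanishes. Equivalently, $B$ is the stabilizer of the line $\jk{e_2}$ under the right action of $\GL_2(q)$ on row vectors, so $Bx$ is determined by the line spanned by the second row of $x$. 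I will mostly use this line-stabilizer viewpoint, since it reduces both claims to a count of lines in $\BF_q^2$.

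\textbf{The set $T_1$.} The second row of $\jz{1}{}{x}{1}$ is $(x,1)$, and the second row of $\jz{}{1}{1}{} $ is $(1,0)$. The lines spanned by $(x,1)$ for $x \in \BF_q$ are pairwise distinct, and all differ from the line spanned by $(1,0)$. These are exactly the $q+1$ points of $\mathbb{P}^1(\BF_q)$, so the elements of $T_1$ lie in distinct cosets. As an alternative check, one can compute $\jz{1}{}{x}{1}\jz{1}{}{y}{1}^{-1} = \jz{1}{}{x-y}{1}$, which lies in $B$ only if $x=y$, and similarly for the antidiagonal element.

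\textbf{The set $T_2$.} Here I will use the fact that $\Theta$ is a homomorphism, so that $\Theta(\omega^i)\Theta(\omega^j)^{-1} = \Theta(\omega^{i-j})$. The element $\Theta(x+y\sqrt\varepsilon)$ lies in $B$ exactly when $y\varepsilon = 0$, i.e. when $y=0$, i.e. when $x + y\sqrt{\varepsilon} \in \BF_q^\times$. Now $\omega^k \in \BF_q^\times$ if and only if $(q+1) \mid k$, because $\BF_q^\times$ is the unique subgroup of order $q-1$ of the cyclic group $\BK_q^\times = \jk{\omega}$ of order $q^2-1$. For $0 \ls i \ne j \ls q$ we have $0 < \abs{i-j} \ls q$, so $(q+1) \nmid (i-j)$ and the two cosets are distinct. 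The same argument shows that $\Theta$ induces a bijection $\BK_q^\times/\BF_q^\times \to \GL_2(q)/B$, since $\Theta(\BK_q^\times) \cap B = \Theta(\BF_q^\times)$ and $\abs{\BK_q^\times/\BF_q^\times} = q+1$ equals the index.

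\textbf{Main obstacle.} The only points needing care are consistency checks. One must confirm that $\Theta$ really is an injective homomorphism; this holds because $\jz{x}{y}{y\varepsilon}{x}$ is the matrix of multiplication by $x+y\sqrt\varepsilon$ in a suitable basis of $\BK_q$ over $\BF_q$. One must also confirm the convention that cosets are right cosets $Bx$, which matches the definition $\GL_2(q)=\biguplus Bx_i$. Neither step is a genuine difficulty.
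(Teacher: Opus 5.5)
Your proof is correct and follows essentially the same route as the paper: you use $[\GL_2(q):B]=q+1=\abs{T_i}$ to reduce to pairwise distinctness of cosets, handle $T_1$ by a direct check, and handle $T_2$ via $\Theta(\BK_q^\times)\cap B=\Theta(\BF_q^\times)$ and $q+1\nmid i-j$. Your identification of $Bx$ with the line spanned by the second row of $x$ is only a repackaging of the paper's computation of $g_1g_2^{-1}$. Your criterion $xy^{-1}\in B$ is the correct one for right cosets. The paper instead tests $\Theta(\omega^{d_1})^{-1}\Theta(\omega^{d_2})$, which is valid only because the image of $\Theta$ is abelian.
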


\begin{proof}
	
	Since $\abs{T_i} = q + 1$, it suffices to show that $g_1g_2^{-1} \notin B$ for any $g_1, g_2 \in T_i$.
	
	For $T_1$, note that
	$\jz{1}{}{x_1}{1}\jz{1}{}{x_2}{1}^{-1} = \jz{1}{}{x_1 - x_2}{1} \notin B \text{ if } x_1 \ne x_2$
	and
	$\jz{1}{}{x}{1}\jz{}{1}{1}{}^{-1} = \jz{}{1}{1}{x} \notin B$.
	Hence $T_1$ is a $B$-transversal.
	
	For $T_2$, note that $B \cap \Theta\pt{\BK_q^\times} = \Theta(\BF_q^\times)$. If $0 \ls d_1 < d_2 \ls q$, then $1 \ls d_2 - d_1 \ls q$. Note that
	$\Theta(\omega^{d_1})^{-1}\Theta(\omega^{d_2}) = \Theta(\omega^{d_2 - d_1}) \notin B$ since $q + 1 \nmid d_2 - d_1$.
	Hence $T_2$ is also a $B$-transversal.\qedhere
	
\end{proof}

Recall that $\mu_m: \BF_q^\times \rightarrow \BC^\times$, $\mu_m(\varepsilon^i) = \exp\pt{2\cdot\pi\cdot\sqrt{-1}\cdot m \cdot i/(q - 1)}$,
\[V_{m, 0} := \cb{f: \GL_2(q) \rightarrow \BC \;|\; f\pt{\jz{a}{b}{}{d}x} = \mu_m(a) \cdot f(x)},\]
$\eta(g) := \dim_\BC\cb{f \in V_{m, 0} \;|\; g(f) = f}$ and $\one(x) = 1$ if $x = 1$, $\one(x) = 0$ if $x \ne 1$.

\begin{lemma}\label{eta_lemma}
	
	Suppose $\mathrm{char}(\BF_q) = p$, factor $q - 1 = \ell \cdot m$ where $\ell$ is an odd prime with $\ell \nmid m$. Set $o_1 = o_1(x, y) = \frac{q - 1}{\gcd\pt{\log_\varepsilon(x/y), q - 1}}$ where $x, y \in \BF_q^\times$ and $o_2 = o_2(z) = \frac{\log_\omega(z)}{\gcd\pt{\log_\omega(z), q + 1}}$ where $z \in \BK_q^\times$. Then we have
	\[\eta(g) = \left\{\begin{aligned}
		&(q + 1)\cdot\one\fk{{\mu_{m}(x)}},&&\text{ if } g = c_1(x);\\
		&\pt{{q}{p^{-1}} + 1}\cdot\one\fk{{\mu_{m}(x)}}, &&\text{ if } g = c_2(x);\\
		&\one\fk{\mu_m(x)} + \one\fk{\mu_m(y)} + \gcd{(\log_\varepsilon\pt{{x}/{y}}, q - 1)}\cdot\one\fk{\mu_{m}(x^{o_1})},&&\text{ if } g = c_3(x, y);\\
		&\gcd(\log_\omega(z), q + 1)\cdot\one\fk{\mu_{m}\pt{\varepsilon^{o_2}}},&&\text{ if } g = c_4(z).
	\end{aligned}\right.\]	
	
\end{lemma}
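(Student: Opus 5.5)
The plan is to compute $\eta(g)$ as the number of orbits of the cyclic group $\langle g\rangle$ on $B\backslash\GL_2(q)$ that can carry a nonzero $g$-invariant function in $V_{m,0}$. This is Mackey's formula written out by hand.

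\textbf{Orbit criterion.} Write $\chi\pt{\jz{a}{b}{}{d}} := \mu_m(a)$. A function $f \in V_{m,0}$ satisfies $g(f) = f$ if and only if
\[f(xg) = f(x) \text{ for all } x \qquad\text{and}\qquad f(bx) = \chi(b)f(x) \text{ for all } b \in B.\]
So $f$ is determined by its values on one representative $x$ of each orbit of $\langle g\rangle$ acting on the right on $B\backslash\GL_2(q)$. If $Bxg^k = Bx$, i.e.\ $xg^kx^{-1} \in B$, consistency forces $f(x) = \chi(xg^kx^{-1})f(x)$. Conversely, if $\chi(xg^kx^{-1}) = 1$ for every such $k$, the value $f(x)$ can be chosen freely and extends consistently over the orbit.

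Hence $\eta(g)$ equals the number of orbits $Bx\langle g\rangle$ such that $\chi$ is trivial on $x\langle g\rangle x^{-1} \cap B$. Since this intersection is conjugate to the point stabilizer $\langle g^{s}\rangle$, where $s$ is the orbit size, it suffices to check that $\chi(xg^{s}x^{-1}) = 1$. In every case below $g^{s}$ will be a scalar or will lie in a torus whose conjugate into $B$ is visible, so the check is mechanical.

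\textbf{Cases $c_1, c_2, c_3$.} I identify $B\backslash\GL_2(q)$ with $\mathbb{P}^1(\BF_q)$ via the transversal $T_1$.
\begin{enumerate}[(i)]
\item For $c_1(x)$ all $q+1$ points are fixed, and the stabilizer is $\langle xI\rangle$ with $\chi(xI) = \mu_m(x)$. This gives $(q+1)\one\fk{\mu_m(x)}$.
\item For $c_2(x)$ there is one fixed point. Its stabilizer $\langle g\rangle$ conjugates into $B$ with diagonal $(x,x)$, so its condition is $\mu_m(x) = 1$. The unipotent part moves the remaining $q$ points in orbits of size $p$, giving $q/p$ orbits. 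Each has stabilizer generated by $g^p = x^pI$. Since $\gcd(p, q-1) = 1$, the condition $\mu_m(x^p) = 1$ is equivalent to $\mu_m(x) = 1$. Total: $(q/p+1)\one\fk{\mu_m(x)}$.
\item For $c_3(x,y)$ the two fixed points $0$ and $\infty$ have stabilizer $\langle g\rangle$, conjugated into $B$ as $\mathrm{diag}(x,y)$ and $\mathrm{diag}(y,x)$ respectively. They contribute $\one\fk{\mu_m(x)} + \one\fk{\mu_m(y)}$. On the other $q-1$ points, $g$ acts by multiplication by $x/y$, which has order $o_1$. This gives $(q-1)/o_1 = \gcd(\log_\varepsilon(x/y), q-1)$ orbits, each with stabilizer generated by the scalar $g^{o_1} = x^{o_1}I$. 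Each orbit therefore contributes $\one\fk{\mu_m(x^{o_1})}$.
\end{enumerate}

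\textbf{Case $c_4$.} I use the transversal $T_2$. Because $B \cap \Theta(\BK_q^\times) = \Theta(\BF_q^\times)$, the coset space $B\backslash\GL_2(q)$ is identified, as a $\Theta(\BK_q^\times)$-set, with the cyclic group $\BK_q^\times/\BF_q^\times$ of order $q+1$, generated by $\omega$. Write $z = \omega^k$ and $d = \gcd(k, q+1)$. Then $z$ acts freely with orbits of size $(q+1)/d$, so there are $d$ orbits.

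The stabilizer of each orbit is generated by
\[z^{(q+1)/d} = \omega^{k(q+1)/d} = \pt{\omega^{q+1}}^{k/d} = \varepsilon^{o_2},\]
which is the scalar matrix $\varepsilon^{o_2}I$. Its $\chi$-value is $\mu_m(\varepsilon^{o_2})$, which gives $\gcd(\log_\omega(z), q+1)\,\one\fk{\mu_m(\varepsilon^{o_2})}$.

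\textbf{Expected obstacle.} The main care is needed in the orbit criterion and in the fixed points of $c_2$ and $c_3$. There I must verify which diagonal entry $\chi$ sees after conjugating into $B$; for example, at $\infty$ the conjugation swaps the entries, which is why both $\one\fk{\mu_m(x)}$ and $\one\fk{\mu_m(y)}$ appear. The rest is counting cycle lengths. The hypothesis on $\ell$ plays no role in this lemma.
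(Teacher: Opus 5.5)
Your proposal is correct and follows essentially the same route as the paper: both count the cycles of $g$ on $B\backslash\GL_2(q)$ using the transversals $T_1$ and $T_2$, and both check whether the scalar accumulated around each cycle (your $\chi(xg^sx^{-1})$) is trivial. Your stabilizer/Mackey phrasing packages the paper's ``count independent equations'' argument more cleanly, and it treats the fixed point $t=0$ in the $c_3$ case more carefully than the paper's single display for all $t$.
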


\begin{proof}
	
	The method for calculating $\eta$ is as follows: Note that $\dim_{\BC}V_{m, 0} = q + 1$ and for any $f \in V_{m, 0}$, $f$ is completely determined by $f(x_1), \ldots, f(x_{q + 1})$ where $\cb{x_1, \ldots, x_{q + 1}}$ is a $B$-transversal of $\GL_2(q)$. Fix $g \in \GL_2(q)$ and suppose that $g(\varphi) = \varphi$ where $\varphi \in V_{m, 0}$. Next we examine how many independent equations are needed to relate $f(x_1), \ldots, f(x_{q + 1})$, say $k$ equations, so $\eta(g) = q + 1 - k$. Because $\eta$ is a class function and $\GL_2(q)$ has four conjugacy classes, there are four cases to discuss. 
	
	\textbf{Case 1.} Suppose $\jz{x}{}{}{x}(\varphi) = \varphi$, then for any $t \in T_1$, we have
	\[\jk{t, \varphi}
	= \jk{t, \jz{x}{}{}{x}(\varphi)}
	= \jk{\jz{x}{}{}{x}t, \varphi}
	= \mu_{m}(x)\cdot\jk{t, \varphi}.\]
	Hence $\eta\pt{c_1(x)} = (q + 1)\cdot\one\fk{{\mu_{m}(x)}}$.
	
	\textbf{Case 2.} Suppose $\jz{x}{}{1}{x}(\varphi) = \varphi$, then for any $t \in \BF_q$, we have
	\[\begin{aligned}
		\jk{\jz{1}{}{t}{1}, \varphi}
		&= \jk{\jz{1}{}{t}{1}, \jz{x}{}{1}{x}(\varphi)}
		&&= \jk{\jz{1}{}{t}{1}\jz{x}{}{1}{x}, \varphi} \\
		&= \jk{\jz{x}{}{t\cdot x + 1}{x}, \varphi}
		&&= \jk{\jz{x}{}{}{x}\jz{1}{}{t + x^{-1}}{1}, \varphi} \\
		&= \mu_{m}(x)\cdot\jk{\jz{1}{}{t + x^{-1}}{1}, \varphi}
		&&= \mu_{m}(x^p)\cdot\jk{\jz{1}{}{t}{1}, \varphi}.
	\end{aligned}\]
	and
	\[\begin{aligned}
		\jk{\jz{}{1}{1}{}, \varphi} 
		&= \jk{\jz{}{1}{1}{}, \jz{x}{}{1}{x}(\varphi)}
		&= \jk{\jz{}{1}{1}{}\jz{x}{}{1}{x}, \varphi} \\
		&= \jk{\jz{1}{x}{x}{}, \varphi}
		&= \jk{\jz{x}{1}{}{x}\jz{}{1}{1}{}, \varphi} \\
		&= \mu_{m}(x) \cdot \jk{\jz{}{1}{1}{}, \varphi}.
	\end{aligned}\]
	Hence $\eta\pt{c_2(x)} = \frac{q}{p}\cdot\one\fk{{\mu_{m}(x^p)}} + \one\fk{{\mu_{m}(x)}} = \pt{q\cdot p^{-1} + 1}\cdot\one\fk{\mu_m(x)}$ since $p \nmid q - 1$.
	
	\textbf{Case 3.} Suppose $\jz{x}{}{}{y}(\varphi) = \varphi$ and the order of $x\cdot y^{-1}$ is $o_1$ in $\BF_q^\times$, explicitly, $o_1 = \frac{q - 1}{\gcd\pt{\log_\varepsilon(x/y), q - 1}}$. Then for any $t \in \BF_q$, we have
	\[\begin{aligned}
		\jk{\jz{1}{}{t}{1}, \varphi}
		&= \jk{\jz{1}{}{t}{1}, \jz{x}{}{}{y}(\varphi)} 
		&&= \jk{\jz{1}{}{t}{1}\jz{x}{}{}{y}, \varphi} \\
		&= \jk{\jz{x}{}{t\cdot x}{y}, \varphi} 
		&&= \jk{\jz{x}{}{}{y}\jz{1}{}{t\cdot x\cdot y^{-1}}{1}, \varphi} \\
		&= \mu_m(x)\cdot\jk{\jz{1}{}{t\cdot x\cdot y^{-1}}{1}, \varphi}  
		&&= \mu_m(x^{o_1})\cdot\jk{\jz{1}{}{t}{1}, \varphi}.
	\end{aligned}\]
	and
	\[\begin{aligned}
		\jk{\jz{}{1}{1}{}, \varphi}
		&= \jk{\jz{}{1}{1}{}, \jz{x}{}{}{y}(\varphi)}
		&= \jk{\jz{}{1}{1}{}\jz{x}{}{}{y}, \varphi} \\
		&= \jk{\jz{}{y}{x}{}, \varphi}
		&= \jk{\jz{y}{}{}{x}\jz{}{1}{1}{}, \varphi} \\
		&= \mu_m(y)\cdot\jk{\jz{}{1}{1}{}, \varphi}.
	\end{aligned}\]
	Hence $\eta\pt{c_3(x, y)} = \one\fk{\mu_m(x)} + \one\fk{\mu_m(y)} + \frac{q - 1}{o_1}\cdot\one\fk{\mu_{m}(x^{o_1})} = \one\fk{\mu_m(x)} + \one\fk{\mu_m(y)} + \gcd\pt{\log_\varepsilon(x/y), q - 1}\cdot\one\fk{\mu_m(x^{o_1})}$.
	
	\textbf{Case 4.} Suppose $\Theta(z)(\varphi) = \varphi$, then for any $0 \ls i \ls q$, we have (we omit the notation $\Theta$)
	\[\begin{aligned}
		\jk{\omega^i, \varphi}
		&= \jk{\omega^i, \omega^{\log_\omega(z)}(\varphi)}
		&&= \jk{\omega^i \cdot \omega^{\log_\omega(z)}, \varphi} \\
		&= \jk{\omega^{i + \log_\omega(z)}, \varphi} 
		&&= \jk{\omega^{i + \lcm(\log_\omega(z), q + 1)}, \varphi} \\
		&= \jk{\pt{\omega^{q + 1}}^{o_2}\cdot\omega^i, \varphi}
		&&= \jk{\varepsilon^{o_2}\cdot\omega^i, \varphi} \\
		&= \mu_{m}(\varepsilon^{o_2})\cdot\jk{\omega^i, \varphi}.
	\end{aligned}\]
	Hence $\eta(c_4(z)) = \gcd\pt{\log_\omega(z), q + 1}\cdot\one\fk{{\mu_{m}(\varepsilon^{o_2})}}$.\qedhere
	
\end{proof}

%From now on we let $n = 0$.

\begin{corollary}\label{derangements}
	
	Factor $q - 1 = \ell \cdot m$ where $\ell$ is an odd prime with $\ell \nmid m$. Then
	\[\eta(g) = 0 \text{ iff } \left\{\begin{aligned}
		&\log_\varepsilon(x) \not\equiv 0 &\pmod \ell,&\quad&&\text{ if } g = c_1(x) \text{ or } c_2(x);\\
		&\log_\varepsilon(x) \equiv \log_\varepsilon(y) \not\equiv 0 &\pmod \ell,&\quad&&\text{ if } g = c_3(x, y);\\
		&\log_\omega(z) \not\equiv 0 &\pmod \ell,&\quad&&\text{ if } g = c_4(z).
	\end{aligned}\right.\]	
	
\end{corollary}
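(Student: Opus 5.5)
The plan is to read off the vanishing of $\eta$ directly from the formula in Lemma \ref{eta_lemma}. Two facts make this work.

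First, every summand appearing in Lemma \ref{eta_lemma} is a nonnegative integer times an indicator. So $\eta(g)=0$ holds exactly when each summand vanishes, and each coefficient ($q+1$, $q p^{-1}+1$, and the two gcd's) is at least $1$.

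Second, I will translate each indicator into a congruence modulo $\ell$. Writing $x=\varepsilon^j$, we have $\mu_m(x)=\zeta^{mj}$. This equals $1$ iff $(q-1)\mid mj$, i.e. iff $\ell\mid j$. Hence
\[\one\fk{\mu_m(x)}=1 \iff \log_\varepsilon(x)\equiv 0 \pmod \ell.\]
With this, the cases $c_1(x)$ and $c_2(x)$ are immediate.

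For $c_3(x,y)$, set $a=\log_\varepsilon x$, $b=\log_\varepsilon y$, $d=\gcd(a-b,q-1)$ and $o_1=(q-1)/d$. Vanishing requires $\ell\nmid a$, $\ell\nmid b$, and $\ell\nmid a\,o_1$, the last because $x^{o_1}=\varepsilon^{a o_1}$.
\begin{itemize}
\item If $\ell\nmid d$, then $\ell\mid o_1$, since $\ell\mid q-1$. The third term is then $d\ge 1$, so $\eta\neq 0$.
\item Hence we need $\ell\mid d$, i.e. $a\equiv b\pmod\ell$. Because $\ell$ divides $q-1$ with multiplicity one, $\ell\mid d$ forces $\ell\nmid o_1$. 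The third condition then becomes $\ell\nmid a$.
\end{itemize}
Altogether $\eta(c_3(x,y))=0$ iff $a\equiv b\not\equiv 0\pmod\ell$, and the condition $\ell\nmid b$ follows automatically. The converse direction is checked by the same computation.

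For $c_4(z)$, set $L=\log_\omega z$. The coefficient $\gcd(L,q+1)$ is at least $1$, so $\eta=0$ iff $\ell\nmid o_2=L/\gcd(L,q+1)$. Since $\ell$ is odd and $\ell\mid q-1$, while $\gcd(q-1,q+1)=2$, we get $\ell\nmid q+1$. Hence $\ell\nmid\gcd(L,q+1)$, and so $\ell\mid o_2$ iff $\ell\mid L$.

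The only delicate point, and the step I expect to need care, is this last case. Since $o_2$ is defined via a representative of $L\in\BZ/(q^2-1)\BZ$, I will note that $\ell\mid q^2-1$, so the residue of $L$ modulo $\ell$ is well defined. The multiplicity-one hypothesis on $\ell$ is used only in the $c_3$ case, to ensure $\ell\nmid o_1$.
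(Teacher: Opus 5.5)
Your proposal is correct and follows essentially the same route as the paper. Both arguments read off the vanishing of each indicator in Lemma \ref{eta_lemma}, convert $\mu_m(\varepsilon^j)=1$ into $\ell\mid j$, split the $c_3$ case on whether $\ell$ divides $\log_\varepsilon(x/y)$ (using $\ell\nmid m$), and use $\ell\nmid q+1$ for $c_4$. Your version is slightly more explicit: the paper asserts $\eta(c_3(x,y))\neq 0$ iff $\mu_m(x^{o_1})=1$ without noting that either of the first two indicators being $1$ forces the third to be $1$, whereas you require all three indicators to vanish and so sidestep this point.
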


\begin{proof}
	
	Recall that $o_1 = o_1(x, y) = \frac{q - 1}{\gcd\pt{\log_\varepsilon(x/y), q - 1}}$ and $o_2 = o_2(z) = \frac{\log_\omega(z)}{\gcd\pt{\log_\omega(z), q + 1}}$.
	
	\textbf{Case 1 and 2.} If $g = c_1(x)$ or $c_2(x)$, then
	\[\eta(g) \ne 0 \text{ iff } \mu_m(x) = 1 \text{ iff } q - 1 \mid m \cdot \log_\varepsilon(x) \text{ iff } \ell \mid \log_\varepsilon(x).\]
	
	\textbf{Case 3.} If $g = c_3(x, y)$, then
	\[\begin{aligned}
		\eta(g) \ne 0
		&\text{ iff } \mu_{m}(x^{o_1}) = 1 
		\text{ iff } q - 1 \mid o_1 \cdot m\cdot \log_\varepsilon(x)\\
		&\text{ iff } q - 1 \mid \frac{(q - 1) \cdot m \cdot \log_\varepsilon(x)}{\gcd(\log_\varepsilon(x/y), q - 1)}
		\text{ iff } \gcd(\log_\varepsilon(x/y), q - 1) \mid m\cdot \log_\varepsilon(x)\\
		&\text{ iff } \left\{\begin{aligned}
			&\gcd\pt{\log_\varepsilon(x/y), m}\mid m\cdot \log_\varepsilon(x),&\quad&\text{ if } \ell \nmid \log_\varepsilon(x/y);\\
			&\ell\cdot\gcd\pt{\frac{\log_\varepsilon(x/y)}{\ell}, m}\mid m \cdot \log_\varepsilon(x),&\quad&\text{ if } \ell \mid \log_\varepsilon(x/y).
		\end{aligned}\right.\\
		&\text{ iff } \log_\varepsilon(x) \not\equiv \log_\varepsilon(y) \text{ or } \log_\varepsilon(x) \equiv \log_\varepsilon(y) \equiv 0 \pmod{\ell}, \quad \text{ since } \ell \nmid m.
	\end{aligned}\]
	
	\textbf{Case 4.} If $g = c_4(z)$,
	\[\begin{aligned}
		\eta(g) \ne 0
		&\text{ iff } \mu_m\pt{\varepsilon^{o_2}} = 1 
		\text{ iff } q - 1 \mid \frac{m \cdot \log_\omega(z)}{\gcd(\log_\omega(z), q + 1)} \\
		&\text{ iff } \ell \mid \frac{\log_\omega(z)}{\gcd(\log_\omega(z), q + 1)} 
		\text{ iff } \ell \mid \log_\omega(z), \quad \text{ since } \ell \nmid q + 1.
	\end{aligned}\]
	The proof is completed.\qedhere	
\end{proof}

Let that $H := \cb{g \in \GL_2(q) \;|\; \log_\varepsilon(\det(g)) \equiv 0 \pmod{\ell}}$.

\begin{corollary}
	
	$H$ is $1$-intersecting, that is, $\eta(g) \gs 1$ for any $g \in H$.
	
\end{corollary}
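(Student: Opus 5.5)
The plan is to check directly that no element of $H$ satisfies the derangement criterion of Corollary \ref{derangements}. Since $\eta$ is a class function and $H$ is a union of conjugacy classes (it is the preimage under $\det$ of the subgroup of $\ell$-th powers in $\BF_q^\times$), it suffices to take $g$ to be one of the representatives $c_1(x), c_2(x), c_3(x,y), c_4(z)$ of Lemma \ref{conjugacy_class} with $\log_\varepsilon(\det g) \equiv 0 \pmod{\ell}$, and to verify that the corresponding condition in Corollary \ref{derangements} fails. The hypothesis that $\ell$ is odd is what drives each case.

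For $g = c_1(x)$ or $c_2(x)$ we have $\det g = x^2$, so $2\log_\varepsilon(x) \equiv 0 \pmod{\ell}$. Because $\ell$ is odd, this gives $\log_\varepsilon(x) \equiv 0 \pmod \ell$, hence $\eta(g) \ne 0$.

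For $g = c_3(x,y)$ we have $\det g = xy$, so $\log_\varepsilon(x) + \log_\varepsilon(y) \equiv 0 \pmod \ell$. Suppose $g$ were a derangement. Then $\log_\varepsilon(x) \equiv \log_\varepsilon(y) \not\equiv 0$, which forces $2\log_\varepsilon(x) \equiv 0$, and again oddness of $\ell$ gives $\log_\varepsilon(x) \equiv 0$, a contradiction.

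The only case needing care is $g = c_4(z) = \Theta(z)$. Here $\det \Theta(x + y\sqrt{\varepsilon}) = x^2 - \varepsilon y^2 = \CN(z) = z^{q+1}$. Since $\omega^{q+1} = \varepsilon$, writing $z = \omega^k$ gives $\CN(z) = \varepsilon^{k}$, so $\log_\varepsilon(\det g) \equiv \log_\omega(z) \pmod{q-1}$. This reduction is well defined because $q - 1 \mid q^2 - 1$. Since $\ell \mid q-1$, the condition $g \in H$ becomes $\log_\omega(z) \equiv 0 \pmod \ell$, which is exactly the negation of the derangement condition for $c_4$. Thus every $g \in H$ has $\eta(g) \gs 1$.

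Finally, to read this as a $1$-intersecting statement for the set $H$, I would note that $H$ is a subgroup, since it is the kernel of $g \mapsto \log_\varepsilon(\det g) \bmod \ell$. Hence $g_2^{-1}g_1 \in H$ for all $g_1, g_2 \in H$, and so $\eta(g_2^{-1}g_1) \gs 1$, as required by the reformulation in the Strategy subsection. I expect no real obstacle. The main point to get right is the identification $\det\Theta(z) = \CN(z)$ together with the compatibility of the two discrete logarithms modulo $\ell$.
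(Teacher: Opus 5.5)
Your proposal is correct and follows the paper's proof: a case check over the four conjugacy-class types via Corollary \ref{derangements}, where oddness of $\ell$ handles $c_1(x)$, $c_2(x)$, $c_3(x,y)$, and $\det\Theta(z)=\CN(z)$ gives $\log_\varepsilon(\det g)\equiv\log_\omega(z)\pmod{\ell}$ for $c_4(z)$. Your explicit remarks that $H$ is a union of conjugacy classes and a subgroup, so that $\eta(g_2^{-1}g_1)\gs 1$ for all $g_1,g_2\in H$, fill in steps the paper leaves implicit.
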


\begin{proof}
	
	\textbf{Case 1 and 2.} If $g = c_1(x) \in H$ or $g = c_2(x) \in H$, then $\log_\varepsilon(\det(g)) = \log_\varepsilon(x^2) = 2\cdot\log_\varepsilon(x) \equiv 0 \pmod{\ell}$, which is equivalent to $\log_\varepsilon(x) \equiv 0 \pmod{\ell}$ since $\ell \nmid 2$. Hence $\eta(g) \gs 1$ by Corollary \ref{derangements}.
	
	\textbf{Case 3.} If $g = c_3(x, y) \in H$, then $\log_\varepsilon(\det(g)) = \log_\varepsilon(x) + \log_\varepsilon(y) \equiv 0 \pmod{\ell}$. If $\log_\varepsilon(x) \equiv \log_\varepsilon(y) \pmod{\ell}$, then $\log_\varepsilon(\det(g)) = 2 \cdot \log_\varepsilon(x) \equiv 0 \pmod{\ell}$. Hence $\eta(g) \gs 1$ by Corollary \ref{derangements}.
	
	\textbf{Case 4.} If $g = c_4(z) \in H$, then $\log_\varepsilon(\det(g)) = \log_\varepsilon(\CN z) = \log_{\omega^{q + 1}}(z^{q + 1}) = \log_\omega(z) \equiv 0 \pmod{\ell}$. Hence $\eta(g) \gs 1$ by Corollary \ref{derangements}.\qedhere
	
\end{proof}

Hence $\max_{\pt{S_1, S_2}}\sqrt{\abs{S_1}\cdot\abs{S_2}} \gs \abs{H} = \abs{\GL_2(q)}/\ell$, where $S_1, S_2 \subseteq \GL_2(q)$ are cross-$1$-intersecting.

%\newpage
\subsection{Compute the eigenvalues}

A \emph{character table} is a table whose rows are irreducible characters, and whose columns are conjugacy classes of a group. Let $\zeta = \exp\pt{\frac{2\cdot\pi\cdot\sqrt{-1}}{q - 1}}$ and $\xi = \exp\pt{\frac{2\cdot\pi\cdot\sqrt{-1}}{q^2 - 1}}$. Let $\mu_i: \BF_q^\times \rightarrow \BC^\times$, $\mu_i(\varepsilon^j) = \zeta^{i\cdot j}$ and $\lambda_i: \BK_q^\times \rightarrow \BC^\times$, $\lambda_i(\omega^j) = \xi^{i\cdot j}$.

\begin{theorem}[Table 12.5(I)-(IV) in \cite{MR1878556}]
	
	The character table of $\GL_2(q)$ is:
	
	\[\begin{tabular}{p{0.4cm}|p{2.7cm}p{2.7cm}p{4.5cm}p{2.7cm}}
		\toprule[0.1em]
		
		& $c_1(x)$ & $c_2(x)$ & $c_3(x, y)$ & $c_4(z)$ \\
		
		\midrule[0.1em]
		
		\textbf{O} & $\mu_{2r}(x)$ & $\mu_{2r}(x)$ & $\mu_{r}(x\cdot y)$ & $\mu_r(\CN z)$ \\
		
		\textbf{S} & $q\cdot\mu_{2r}(x)$ & $0$ & $\mu_{r}(x\cdot y)$ & $-\mu_r(\CN z)$ \\
		
		\textbf{P} & $(q + 1)\cdot\mu_{r + s}(x)$ & $\mu_{r + s}(x)$ & $\mu_r(x)\cdot\mu_s(y) + \mu_r(y)\cdot\mu_s(x)$ & $0$ \\
		
		\textbf{W} & $(q - 1)\cdot\lambda_{r}(x)$ & $-\lambda_{r}(x)$ & $0$ & $-\lambda_r(z) - \lambda_r(z^q)$ \\
		
		\bottomrule[0.1em]
	\end{tabular}\]
	In \textbf{O}-row and \textbf{S}-row, the range of $r$ is $1 \ls r \ls q - 1$; in \textbf{P}-row, the range of $r, s$ is $1 \ls r < s \ls q - 1$; in \textbf{W}-row, the range of $r$ is $1 \ls r \ls q^2 - 1$ with $q + 1 \nmid r$.
	
\end{theorem}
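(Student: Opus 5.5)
The plan is to build each row of the table from characters induced from the Borel subgroup $B$, and to handle the cuspidal (\textbf{W}) row last using a virtual-character argument. The final check will be that the four families are distinct and exhaust the irreducibles. Throughout I use the conjugacy classes and class sizes of Lemma \ref{conjugacy_class}, and the facts that $\abs{B} = (q-1)^2q$ and $\abs{\GL_2(q)} = (q^2-1)(q^2-q)$.

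\textbf{Rows O, P, S.}
\begin{enumerate}
\item The \textbf{O}-row is immediate. Each $\mu_r\circ\det$ is a one-dimensional character, and $\det c_1(x) = \det c_2(x) = x^2$, $\det c_3(x,y) = xy$ and $\det c_4(z) = \CN z$. This gives the row as stated.
\item For the \textbf{P}-row, note that $V_{r,s}$ is the induced representation $\mathrm{Ind}_B^{\GL_2(q)}(\mu_r\otimes\mu_s)$. I would apply the Frobenius formula
\[\chi(g) = \frac{1}{\abs{B}}\sum_{h:\, hgh^{-1}\in B}(\mu_r\otimes\mu_s)(hgh^{-1}).\]
Equivalently, count the fixed points of $g$ on $\GL_2(q)/B \cong \mathbb{P}^1(\BF_q)$, each weighted by the character of the induced action on the fibre. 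This is the same transversal bookkeeping as in Lemma \ref{eta_lemma}.
   \begin{itemize}
   \item $c_1(x)$ fixes all $q+1$ lines, giving $(q+1)\mu_{r+s}(x)$.
   \item $c_2(x)$ fixes exactly one line, giving $\mu_{r+s}(x)$.
   \item $c_3(x,y)$ fixes the two coordinate lines, with the roles of $x$ and $y$ swapped between them. This gives $\mu_r(x)\mu_s(y)+\mu_r(y)\mu_s(x)$.
   \item $c_4(z)$ fixes no line, giving $0$.
   \end{itemize}
\item To show $V_{r,s}$ is irreducible for $r\ne s$, I would compute $\jk{\chi,\chi} = 1$ by summing over the classes. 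Alternatively, Mackey's criterion applies, since the double cosets $B\backslash \GL_2(q)/B$ are just $B$ and $BwB$, and $w$ swaps $\mu_r\otimes\mu_s$ to $\mu_s\otimes\mu_r$.
\item For $r = s$, the same formula gives a character of norm $2$ that contains $\mu_r\circ\det$. The difference is therefore an irreducible character, and it is exactly the \textbf{S}-row: $q\mu_{2r}(x)$, $0$, $\mu_r(xy)$, $-\mu_r(\CN z)$.
\end{enumerate}

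\textbf{Row W.} This is the main obstacle, since these representations are not induced from $B$. I would first try the standard construction from the nonsplit torus $\Theta(\BK_q^\times)$. Let $\lambda = \lambda_r$ with $\lambda\ne\lambda^q$. Form the virtual character
\[\psi = V_{r,0}\otimes\chi_{\text{S-type}} - V_{r,0} - \mathrm{Ind}_{\Theta(\BK_q^\times)}^{\GL_2(q)}\lambda,\]
where $V_{r,0}$ here means the principal series attached to $\lambda|_{\BF_q^\times}$ and the trivial character. I would compute $\psi$ class by class.
\begin{itemize}
\item For the induced character from the torus, $c_3$ has no conjugates in the torus, and $c_4(z)$ meets it in $\cb{z, z^q}$.
\item The target is to show $\psi$ takes the values $(q-1)\lambda(x)$, $-\lambda(x)$, $0$, and $-\lambda(z)-\lambda(z^q)$ on the four class types.
\end{itemize}
Then $\jk{\psi,\psi} = 1$, which is a direct class-sum computation using $\lambda\ne\lambda^q$, together with $\psi(1) = q-1 > 0$, shows that $\psi$ is an honest irreducible character.

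\textbf{Completeness.} Count the families up to equivalence, identifying $\lambda_r\sim\lambda_{rq}$ and $(r,s)\sim(s,r)$:
\[(q-1) + (q-1) + \binom{q-1}{2} + \binom{q}{2}.\]
This equals the number of conjugacy classes in Lemma \ref{conjugacy_class}. As a consistency check, the squares of the degrees sum to $\abs{\GL_2(q)}$. Hence the table is complete.
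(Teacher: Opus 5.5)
The paper does not prove this statement. It imports the table from the cited textbook and uses it as a black box. You instead derive it by the standard route:
\begin{itemize}
\item parabolic induction from $B$ for the \textbf{O}, \textbf{P}, \textbf{S} rows, using Mackey's criterion for $r\ne s$ and the norm-$2$ splitting off $\mu_r\circ\det$ for $r=s$;
\item the virtual character built from the nonsplit torus $\Theta(\BK_q^\times)$ for the \textbf{W} row;
\item a count against the classes of Lemma \ref{conjugacy_class}.
\end{itemize}
This is sound, and the numbers check. For example, $\psi(c_1(x)) = q(q+1)\lambda(x)-(q+1)\lambda(x)-q(q-1)\lambda(x) = (q-1)\lambda(x)$. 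Also $\abs{\GL_2(q)}\cdot\jk{\psi,\psi} = (q-1)^3+(q-1)^2(q+1)+q(q-1)^3 = q(q+1)(q-1)^2$, as required. What your route buys over the citation is self-containment. It also makes explicit that $V_{r,s}=\mathrm{Ind}_B^{\GL_2(q)}(\mu_r\otimes\mu_s)$, and that its character comes from the same fixed-point bookkeeping on $B\backslash\GL_2(q)$ that Lemma \ref{eta_lemma} carries out. The citation buys only brevity.

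There are two things to tighten.

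First, the factor $\chi_{\text{S-type}}$ must be the untwisted Steinberg character: the \textbf{S}-row with $\mu_r$ trivial, i.e.\ $r=q-1$, with values $q,0,1,-1$. Its value $1$ on $c_3(x,y)$ is exactly what makes $\psi(c_3)=(\chi_{\text{S-type}}(c_3)-1)\cdot\chi_{V_{r,0}}(c_3)$ vanish. A nontrivial twist by $\mu_t\circ\det$ destroys this, and also spoils the value at $c_1$.

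Second, the final count proves completeness only once you know the listed characters are pairwise non-isomorphic. You announce this but never verify it, and the sum-of-squares identity does not replace it. The check is quick:
\begin{itemize}
\item The degrees $1,q,q+1,q-1$ separate the four families, since $q\ge 3$.
\item In rows \textbf{O} and \textbf{S}, the values $\mu_r(xy)$ at $c_3(x,y)$ determine $\mu_r$.
\item In row \textbf{P}, the values at $c_1$ and $c_3$ determine the function $\mu_r\otimes\mu_s+\mu_s\otimes\mu_r$ on the whole diagonal torus.
\item In row \textbf{W}, the values at $c_1$ and $c_4$ determine $\lambda_r+\lambda_{rq}$ on all of $\BK_q^\times$.
\end{itemize}
Linear independence of characters of these abelian groups then recovers $\mu_r$, $\{\mu_r,\mu_s\}$ and $\{\lambda_r,\lambda_{rq}\}$ respectively. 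Your identification $\lambda_r\sim\lambda_{rq}$ is also the correct reading of the \textbf{W}-range in the statement, which as written lists each cuspidal character twice.
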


\begin{definition}\label{derangement_conjugacy_class}
	
	Factor $q - 1 = \ell \cdot m$ where $\ell$ is an odd prime with $\ell \nmid m$, define
	
	\begin{itemize}
		\item $\sigma_C(r) := \sum_{x \in C}\mu_r(x)$, where $C := \cb{x \in \BF_q^\times \;|\; \log_\varepsilon(x) \not\equiv 0 \pmod{\ell}}.$ 
		
		\item $\sigma_D(r, s) := \sum_{\pt{x, y} \in D}\mu_r(x)\cdot\mu_s(y)$, where $$D := \cb{\pt{x, y} \in \pt{\BF_q^\times}^{2} \;|\; x \ne y \text{ and }  \log_\varepsilon(x) \equiv \log_\varepsilon(y) \not\equiv 0\pmod{\ell}}.$$Note that, by symmetry, we have$$\sum_{\cb{x, y} \in \binom{\BF_q^\times}{2}: \log_\varepsilon(x) \equiv \log_\varepsilon(y) \not\equiv 0 \pmod{\ell}}\mu_r(x)\cdot\mu_s(y) + \mu_r(y)\cdot\mu_s(x) = \sigma_D(r, s).$$
		
		\item $\sigma_E^{(1)}(r) := \sum_{z \in E}\lambda_r(z)$, $\sigma_E^{(2)}(r) := \sum_{z \in E}\mu_r\pt{\CN z}$, where $$E := \cb{z \in \BK_q^\times \setminus \BF_q^\times \;|\;  \log_\omega(z) \not\equiv 0\pmod{\ell}}.$$
	\end{itemize}
	
\end{definition}

From Lemma \ref{derangements} we know that the derangements of $\GL_2(q)$ belong to four families of conjugacy classes:
\begin{itemize}
	\item $\fC_i = \cb{g \in \GL_2(q) \;|\; g {\sim} c_i(x) \text{ with } \log_\varepsilon(x) \not\equiv 0 \pmod{\ell}}$, $i = 1, 2$;
	
	\item $\fC_3 = \cb{g \in \GL_2(q) \;|\; g {\sim} c_3(x, y) \text{ with }  \log_\varepsilon(x) \equiv \log_\varepsilon(y) \not\equiv 0 \pmod{\ell}}$;
	
	\item $\fC_4 = \cb{g \in \GL_2(q) \;|\; g {\sim} c_4(z) \text{ with }  \log_\omega(z) \not\equiv 0 \pmod{\ell}}$.
\end{itemize}

Note that $\lambda_r|_{\BF_q} = \mu_r$. Combining Theorem \ref{babai}, Lemma \ref{conjugacy_class} and Definition \ref{derangement_conjugacy_class} we have the following.

\begin{lemma}
	The eigenvalues for the four Cayley graphs are shown as follows:
	\[\begin{tabular}{p{0.4cm}|p{3.1cm}p{3.1cm}p{3.3cm}p{3.1cm}}
		\toprule[0.1em]
		
		& $\Cay(\GL_2(q), \fC_1)$ & $\Cay(\GL_2(q), \fC_2)$ & $\Cay(\GL_2(q), \fC_3)$ & $\Cay(\GL_2(q), \fC_4)$ \\
		
		\midrule[0.1em]
		
		\textbf{O} & $\sigma_C(2r)$ & $(q^2 - 1)\cdot\sigma_C(2r)$ & $q\cdot(q + 1)\cdot\sigma_D(r, r)/2$ & $q\cdot(q - 1)\cdot\sigma_E^{(2)}(r)/2$ \\
		
		\textbf{S} & $\sigma_C(2r)$ & $0$ & $(q + 1)\cdot\sigma_D(r, r)/2$ & $-(q - 1)\cdot\sigma_E^{(2)}(r)/2$ \\
		
		\textbf{P} & $\sigma_C(r + s)$ & $(q - 1)\cdot\sigma_C(r + s)$ & $q\cdot\sigma_D(r, s)$ & $0$ \\
		
		\textbf{W} & $\sigma_C(r)$ & $-(q + 1)\cdot\sigma_C(r)$ & $0$ & $-q\cdot\sigma_E^{(1)}(r)$ \\
		
		\bottomrule[0.1em]
	\end{tabular}\]
\end{lemma}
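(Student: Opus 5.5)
The table follows by applying Theorem \ref{babai} separately to each of the four conjugation-invariant sets $\fC_1, \dots, \fC_4$. Each $\fC_i$ is a union of full conjugacy classes, so for every irreducible $\rho$ the eigenvalue is $\frac{1}{\dim\rho}\sum_{c}\abs{c}\,\chi_\rho(c)$, where $c$ runs over the classes contained in $\fC_i$. Each $\fC_i$ is also inverse-closed, since the conditions in Corollary \ref{derangements} are preserved by inversion: $\log \mapsto -\log$ and $\ell \nmid 2$. The plan is therefore bookkeeping. For each pair (row type, set $\fC_i$) we multiply the class size from Lemma \ref{conjugacy_class} by the character value from the character table, sum over the parametrizing set of classes, and divide by the degree. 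The degrees are $1$ for \textbf{O}, $q$ for \textbf{S}, $q+1$ for \textbf{P} and $q-1$ for \textbf{W}, read off from the $c_1(1)$ column.

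\textbf{Columns $\fC_1$ and $\fC_2$.} The classes are indexed by $x \in C$, with sizes $1$ and $q^2-1$ respectively. For $\fC_1$ the character value at $c_1(x)$ is $\dim\rho$ times $\mu_{2r}(x)$, $\mu_{r+s}(x)$ or $\lambda_r(x)$. Dividing by $\dim\rho$ gives $\sigma_C(2r)$, $\sigma_C(r+s)$ and $\sigma_C(r)$, where for \textbf{W} we use $\lambda_r|_{\BF_q^\times}=\mu_r$. This requires the normalization $\omega^{q+1}=\varepsilon$ (i.e.\ $\omega$ maps to $\varepsilon$ under the norm), so that $\lambda_r(\varepsilon^j)=\xi^{r(q+1)j}=\zeta^{rj}$. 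For $\fC_2$ we multiply by $q^2-1$ and divide by the degree, obtaining $(q^2-1)\sigma_C(2r)$, then $0$ for \textbf{S}, then $\frac{q^2-1}{q+1}\sigma_C(r+s)=(q-1)\sigma_C(r+s)$, and finally $-\frac{q^2-1}{q-1}\sigma_C(r)=-(q+1)\sigma_C(r)$.

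\textbf{Column $\fC_3$.} The classes are unordered pairs $\{x,y\}$ with $(x,y)\in D$, each of size $q^2+q$. For \textbf{P}, the identity recorded in Definition \ref{derangement_conjugacy_class} shows that the sum of $\mu_r(x)\mu_s(y)+\mu_r(y)\mu_s(x)$ over unordered pairs equals $\sigma_D(r,s)$. This gives $\frac{q(q+1)}{q+1}\sigma_D(r,s)=q\,\sigma_D(r,s)$. For \textbf{O} and \textbf{S}, the value $\mu_r(xy)$ is symmetric in $x,y$, so the unordered sum is $\frac12\sigma_D(r,r)$. This yields $q(q+1)\sigma_D(r,r)/2$ for \textbf{O} and $(q+1)\sigma_D(r,r)/2$ for \textbf{S}. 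For \textbf{W} the value is $0$.

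\textbf{Column $\fC_4$.} The classes are the orbits $\{z,z^q\}$ with $z\in E$, each of size $q^2-q$. Here we must check that $E$ is stable under $z\mapsto z^q$. This holds because $\log_\omega(z^q)\equiv q\log_\omega z \equiv \log_\omega z \pmod{\ell}$, using $\ell \mid q-1$. Also $z^q\neq z$ for $z\notin\BF_q$, so summing over classes is half the sum over $E$.
\begin{itemize}
\item For \textbf{O}, the value $\mu_r(\CN z)$ is Frobenius-invariant, giving $q(q-1)\sigma^{(2)}_E(r)/2$.
\item For \textbf{S}, dividing by $q$ gives $-(q-1)\sigma_E^{(2)}(r)/2$.
\item For \textbf{P}, the value is $0$.
\item For \textbf{W}, the value $-\lambda_r(z)-\lambda_r(z^q)$ summed over classes gives exactly $-\sum_{z\in E}\lambda_r(z)$. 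Multiplying by $q^2-q$ and dividing by $q-1$ gives $-q\,\sigma_E^{(1)}(r)$.
\end{itemize}

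The only real obstacle is the unordered-versus-ordered factors of $\tfrac12$ in the $\fC_3$ and $\fC_4$ columns, together with the invariance of $D$ under swapping $x,y$ and of $E$ under Frobenius. Once those two symmetry checks are done, every entry follows by direct arithmetic.
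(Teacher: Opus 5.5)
Your proposal is correct and follows the paper's argument: the paper simply cites Theorem \ref{babai} together with Lemma \ref{conjugacy_class}, the character table and $\lambda_r|_{\BF_q}=\mu_r$. You carry out that same bookkeeping explicitly, including the factors of $\tfrac12$ that come from $c_3(x,y)\sim c_3(y,x)$ and $c_4(z)\sim c_4(z^q)$. (Minor point: inverse-closure of the $\fC_i$ does not need $\ell\nmid 2$, since $\log\mapsto-\log$ preserves the conditions for any $\ell$.)
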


\begin{lemma}\label{sum1}
	
	Put $u := r + s$. Ignoring the zero-cases, we have the following
	\[\begin{tabular}{p{3.7cm}p{2.8cm}|p{3.7cm}p{2.8cm}}
		\toprule[0.1em]
		
		Cases & $m^{-1}\cdot\sigma_C(r)$ & Cases & $m^{-1}\cdot\sigma_E^{(1)}(r)$ \\
		
		\midrule[0.1em]
		
		$q - 1 \mid r$ & $\ell - 1$ & $q - 1 \mid r$ & $-(\ell - 1)$ \\
		
		$m \mid r$; $\ell \nmid r$ & $-1$ & $m \mid r$; $\ell \nmid r$ & $1$ \\
		
		\bottomrule[0.1em]
		\toprule[0.1em]
		
		Cases & $m^{-1}\cdot\sigma_D(r, s)$ & Cases & $m^{-1}\cdot\sigma_E^{(2)}(r)$ \\
		
		\midrule[0.1em]
		
		$m \mid r, s$; $\ell \mid u$ & $(\ell - 1)\cdot(m - 1)$ & $q - 1 \mid r$ & $q\cdot(\ell - 1)$ \\
		
		$m \mid r, s$; $\ell \nmid u$ & $-(m - 1)$ & $m \mid r$; $\ell \nmid r$ & $-q$ \\
		
		$m \nmid r, s$; $q - 1 \mid u$ & $-(\ell - 1)$ & $\frac{q - 1}{2} \mid r$; $m \nmid r$ & $-(\ell - 1)$ \\
		
		$m \nmid r, s$; $m \mid u$; $\ell \nmid u$ & $1$ & $\frac{m}{2} \mid r$; $m, \ell \nmid r$ & $1$ \\
		
		\bottomrule[0.1em]
	\end{tabular}\]	
\end{lemma}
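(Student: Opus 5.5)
The plan is to reduce every entry of Lemma \ref{sum1} to orthogonality of characters on the cyclic groups $\BF_q^\times$ and $\BK_q^\times$. Let $H_0 := \cb{x \in \BF_q^\times : \ell \mid \log_\varepsilon(x)}$, the subgroup of index $\ell$ and order $m$. Then $C = \BF_q^\times \setminus H_0$, and
\[\sigma_C(r) = \sum_{x \in \BF_q^\times}\mu_r(x) - \sum_{x \in H_0}\mu_r(x) = (q - 1)\cdot\one\fk{\zeta^{r}} - m\cdot\one\fk{\zeta^{\ell r}},\]
because $\mu_r$ is trivial on $H_0 = \jk{\varepsilon^\ell}$ exactly when $m \mid r$. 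This gives the two rows for $\sigma_C$ at once: $m(\ell - 1)$ when $q - 1 \mid r$, and $-m$ when $m \mid r$ but $\ell \nmid r$. All other $r$ give $0$, and these are the ignored zero-cases.

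\textbf{$\sigma_E^{(1)}$ and $\sigma_E^{(2)}$.} I will write $E$ as the set of $z \in \BK_q^\times$ with $\ell \nmid \log_\omega(z)$, minus those of its elements that lie in $\BF_q^\times$. Since $\log_\omega(\varepsilon^j) = (q + 1)j$ and $\ell \nmid q + 1$, an element $\varepsilon^j \in \BF_q^\times$ lies in $E \cup \BF_q^\times$'s removed part exactly when $\ell \nmid j$, i.e. exactly when $\varepsilon^j \in C$. For $\sigma_E^{(1)}$, the same subgroup-complement computation on $\BK_q^\times$ gives
\[(q^2 - 1)\cdot\one\fk{\xi^r} - \frac{q^2 - 1}{\ell}\cdot\one\fk{\xi^{\ell r}}.\]
Using $\lambda_r|_{\BF_q^\times} = \mu_r$, I then subtract $\sigma_C(r)$. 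In the \textbf{W}-range we have $q + 1 \nmid r$, so the $\BK_q^\times$-part vanishes, because $\xi^{\ell r} = 1$ would force $m(q + 1) \mid r$. Hence $\sigma_E^{(1)}(r) = -\sigma_C(r)$, which matches the table.

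For $\sigma_E^{(2)}$ I use $\mu_r(\CN z) = \lambda_{(q + 1)r}(z)$ on $\BK_q^\times$ and $\mu_r(\CN x) = \mu_{2r}(x)$ on $\BF_q^\times$. This gives
\[\sigma_E^{(2)}(r) = (q^2 - 1)\cdot\one\fk{\zeta^r} - (q + 1)m\cdot\one\fk{\zeta^{\ell r}} - \sigma_C(2r).\]
The four listed cases then come from a parity bookkeeping. Since $q$ is odd and $\ell$ is odd, $m$ is even. So $q - 1 \mid 2r$ iff $\frac{q - 1}{2} \mid r$, and $m \mid 2r$ iff $\frac{m}{2} \mid r$. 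For example, when $q - 1 \mid r$ the value is
\[(q^2 - 1) - (q + 1)m - m(\ell - 1) = mq(\ell - 1).\]

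\textbf{$\sigma_D$.} I will sum over all ordered pairs in $C \times C$ with $\log_\varepsilon(x) \equiv \log_\varepsilon(y) \pmod \ell$, and then subtract the diagonal $x = y$, which contributes $\sigma_C(r + s)$. The pair sum splits over the cosets $\varepsilon^k H_0$, $1 \ls k \ls \ell - 1$, as
\[\sum_{k = 1}^{\ell - 1}\Big(\sum_{x \in \varepsilon^k H_0}\mu_r(x)\Big)\Big(\sum_{y \in \varepsilon^k H_0}\mu_s(y)\Big) = m^2\cdot\one\fk{\zeta^{\ell r}}\,\one\fk{\zeta^{\ell s}}\sum_{k = 1}^{\ell - 1}\zeta^{k(r + s)}.\]
When $m \mid r$ and $m \mid s$, the last sum is a sum of $\ell$-th roots of unity: it equals $\ell - 1$ if $\ell \mid u/m$ and $-1$ otherwise. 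Because $\gcd(\ell, m) = 1$, the condition $\ell \mid u/m$ is the same as $q - 1 \mid u$. Combining with $-\sigma_C(u)$ produces the four rows; for instance $m^2(\ell - 1) - m(\ell - 1) = m(\ell - 1)(m - 1)$. When $m \nmid r$ (equivalently $m \nmid s$ when $m \mid u$), only $-\sigma_C(u)$ survives, which gives $-(\ell - 1)$ and $1$ after dividing by $m$.

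The main obstacle is not conceptual but bookkeeping in $\sigma_E^{(2)}$. There one must correctly separate $\BF_q^\times$ from $\BK_q^\times$, track the factor-of-$2$ shift from $\CN|_{\BF_q^\times}(x) = x^2$, and ensure the listed case conditions are exhaustive among the nonzero cases.
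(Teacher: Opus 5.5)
Your proposal is correct and follows essentially the same route as the paper. Both arguments evaluate each sum by inclusion--exclusion over the subgroups $\langle\varepsilon^\ell\rangle\subseteq\BF_q^\times$, $\BF_q^\times\subseteq\BK_q^\times$ and $\langle\omega^\ell\rangle\subseteq\BK_q^\times$, followed by orthogonality of roots of unity. Your coset factorization for $\sigma_D$ and your reductions $\sigma_E^{(1)}(r)=-\sigma_C(r)$ and $\sigma_E^{(2)}(r)=(q^2-1)\,\one\fk{\zeta^r}-(q+1)m\,\one\fk{\zeta^{\ell r}}-\sigma_C(2r)$ are just repackagings of the paper's four-term sums. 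The only slip is the garbled phrase ``$E\cup\BF_q^\times$'s removed part'', which should simply say that $\{z\in\BF_q^\times:\ell\nmid\log_\omega(z)\}=C$, since $\log_\omega(\varepsilon^j)=(q+1)j$ and $\ell\nmid q+1$.
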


\begin{proof}
	Note that, if $e_k = \exp\pt{2\cdot\pi\cdot\sqrt{-1}/k}$, then we have the easy fact that
	\[\sum_{i = 1}^{k}e_k^{i\cdot r} = \left\{\begin{aligned}
		&k,&\qquad&\text{ if } k \mid r;\\
		&0,&\qquad&\text{ if } k \nmid r.
	\end{aligned}\right.\] 
	We will repeatly use this fact in the calculation below.
	\[\begin{aligned}
		\sigma_C(r)
		&= \sum_{i = 1}^{q - 1}\mu_r\pt{\varepsilon^i} - \sum_{i = 1}^{m}\mu_r\pt{\varepsilon^{i\cdot\ell}} \\
		&= \sum_{i = 1}^{q - 1}{\zeta^{i\cdot r}} - \sum_{i = 1}^{m}{\zeta^{i\cdot \ell \cdot r}} \\
		&= \left\{\begin{aligned}
			&m\cdot(\ell - 1),&\quad&\text{if } q - 1 \mid r;\\
			&m\cdot(-1),&\quad&\text{if } m \mid r \text{ and } \ell \nmid r,
		\end{aligned}\right.\\
		%%%%%%%%%%%%%%%%%%%%
		\sigma_D(r, s)
		&= \sum_{i = 1}^{q - 1}\sum_{j = 1}^{m}\mu_r\pt{\varepsilon^i}\cdot\mu_s\pt{\varepsilon^{i + j\cdot\ell}} - \sum_{i = 1}^{m}\sum_{j = 1}^{m}\mu_r\pt{\varepsilon^{i\cdot\ell}}\cdot\mu_s\pt{\varepsilon^{j\cdot\ell}}\\
		&\quad - \sum_{i = 1}^{q - 1}\mu_r\pt{\varepsilon^i}\cdot\mu_s\pt{\varepsilon^{i}} + \sum_{i = 1}^{m}\mu_r\pt{\varepsilon^{i\cdot\ell}}\cdot\mu_s\pt{\varepsilon^{i\cdot\ell}}\\ 
		&= \sum_{i = 1}^{q - 1}\sum_{j = 1}^{m}\mu_{u}\pt{\varepsilon^i}\cdot\mu_s\pt{\varepsilon^{j\cdot\ell}} - \sum_{i = 1}^{m}\sum_{j = 1}^{m}\mu_r\pt{\varepsilon^{i\cdot\ell}}\cdot\mu_s\pt{\varepsilon^{j\cdot\ell}} - \sum_{i = 1}^{q - 1}\mu_{u}\pt{\varepsilon^i} + \sum_{i = 1}^{m}\mu_{u}\pt{\varepsilon^{i\cdot\ell}} \\
		&= {\sum_{i = 1}^{q - 1}{\zeta^{i \cdot u}} \cdot \sum_{j = 1}^{m}{\zeta^{j \cdot \ell \cdot s}} - \sum_{i = 1}^{m}{\zeta^{i \cdot \ell \cdot r}} \cdot \sum_{j = 1}^{m}{\zeta^{j \cdot \ell \cdot s}} - \sum_{i = 1}^{q - 1}{\zeta^{i\cdot u}} + \sum_{i = 1}^{m}{\zeta^{i \cdot \ell \cdot u}}} \\
		&= \left\{\begin{aligned}
			&m\cdot(m - 1)\cdot(\ell - 1),&\quad&\text{if } m \mid r, s \text{ and } \ell \mid u;\\
			&-m\cdot(m - 1),&\quad&\text{if } m \mid r, s \text{ and } \ell \nmid u;\\
			&-m\cdot(\ell - 1),&\quad&\text{if } m \nmid r, s \text{ and } q - 1 \mid u;\\
			&m,&\quad&\text{if } m \nmid r, s \text{ and } m \mid u \text{ and } \ell \nmid u,
		\end{aligned}\right.\\
		%%%%%%%%%%%%%%%%%%%%
		\sigma_E^{(1)}(r)
		&= {\sum_{i = 1}^{q^2 - 1}\lambda_r\pt{\omega^i} - \sum_{i = 1}^{q - 1}\lambda_r\pt{\omega^{i\cdot(q + 1)}} - \sum_{i = 1}^{m\cdot(q + 1)}\lambda_r\pt{\omega^{i\cdot\ell}} + \sum_{i = 1}^{m}\lambda_r\pt{\omega^{i\cdot(q + 1)\cdot\ell}}}\\
		&= {\sum_{i = 1}^{q^2 - 1}{\xi^{i\cdot r}} - \sum_{i = 1}^{q - 1}{\xi^{i\cdot(q + 1)\cdot r}} - \sum_{i = 1}^{m\cdot(q + 1)}{\xi^{i\cdot\ell\cdot r}} + \sum_{i = 1}^{m}{\xi^{i\cdot(q + 1)\cdot\ell\cdot r}}}\\
		&= {-\sum_{i = 1}^{q - 1}{\xi^{i\cdot(q + 1)\cdot r}} + \sum_{i = 1}^{m}{\xi^{i\cdot(q + 1)\cdot\ell\cdot r}}}\qquad\text{since } q + 1 \nmid r\\
		&= \left\{\begin{aligned}
			&m\cdot(-\ell + 1),&\quad&\text{if } q - 1 \mid r;\\
			&m,&\quad&\text{if } m \mid r \text{ and } \ell \nmid r,
		\end{aligned}\right.
	\end{aligned}\]
	%%%%%%%%%%%%%%%%%%%%
	\[\begin{aligned}
		\sigma_E^{(2)}(r)
		&= {\sum_{i = 1}^{q^2 - 1}\mu_r\pt{\CN\omega^i} - \sum_{i = 1}^{q - 1}\mu_r\pt{\CN\omega^{i\cdot(q + 1)}} - \sum_{i = 1}^{m\cdot(q + 1)}\mu_r\pt{\CN\omega^{i\cdot\ell}} + \sum_{i = 1}^{m}\mu_r\pt{\CN\omega^{i \cdot (q + 1) \cdot \ell}}} \\
		&= {\sum_{i = 1}^{q^2 - 1}\mu_r\pt{\varepsilon^i} - \sum_{i = 1}^{q - 1}\mu_r\pt{\varepsilon^{i \cdot (q + 1)}} - \sum_{i = 1}^{m\cdot(q + 1)}\mu_r\pt{\varepsilon^{i \cdot \ell}} + \sum_{i = 1}^{m}\mu_r\pt{\varepsilon^{i \cdot (q + 1) \cdot \ell}}}\\
		&= {\sum_{i = 1}^{q^2 - 1}{\zeta^{i \cdot r}} - \sum_{i = 1}^{q - 1}{\zeta^{i \cdot 2 \cdot r}} - \sum_{i = 1}^{m\cdot(q + 1)}{\zeta^{i \cdot \ell \cdot r}} + \sum_{i = 1}^{m}{\zeta^{i \cdot 2 \cdot \ell \cdot r}}} \\
		&= \left\{\begin{aligned}
			&m\cdot q \cdot (\ell - 1),&\quad&\text{if } q - 1 \mid r;\\
			&-m\cdot q,&\quad&\text{if } m \mid r \text{ and } \ell \nmid r;\\
			&-m\cdot(\ell - 1),&\quad&\text{if } \frac{q - 1}{2} \mid r \text{ and } m \nmid r;\\
			&m,&\quad&\text{if } \frac{m}{2} \mid r \text{ and } m, \ell \nmid r.
		\end{aligned}\right.
	\end{aligned}\]
	The proof is completed.\qedhere
	
\end{proof}

\begin{lemma}\label{eigenvalue}
	
	Put $u := r + s$. The eigenvalues for the four weighted Cayley graphs $\Gamma_i := m^{-1}\cdot\Cay(\GL_2(q), \fC_i)$ ($i = 1, 2, 3, 4$) are shown as follows (detailed version):
	\[\begin{tabular}{p{3.95cm}|p{0.75cm}p{2.6cm}p{3.4cm}p{2.3cm}}
		\toprule[0.1em]
		
		&$\Gamma_1$ &$\Gamma_2$ &$\Gamma_3$ &$\Gamma_4$	\\
		
		\midrule[0.1em]
		
		\textbf{O}: $q - 1 \mid r$	&$\ell - 1$	&$(q^2 - 1)\cdot(\ell - 1)$	&$\binom{q + 1}{2}\cdot(\ell - 1)\cdot(m - 1)$	&$\binom{q}{2}\cdot q\cdot(\ell - 1)$\\
		
		\textbf{O}: $m \mid r$; $\ell \nmid r$	&$-1$	&$-(q^2 - 1)$	&$-\binom{q + 1}{2}\cdot(m - 1)$	&$-\binom{q}{2}\cdot q$\\
		
		\textbf{O}: $\frac{q - 1}{2} \mid r$; $m \nmid r$	&$\ell - 1$	&$(q^2 - 1)\cdot(\ell - 1)$	&$-\binom{q + 1}{2}\cdot(\ell - 1)$	&$-\binom{q}{2}\cdot(\ell - 1)$\\
		
		\textbf{O}: $\frac{m}{2} \mid r$; $m, \ell \nmid r$	&$-1$	&$-(q^2 - 1)$	&$\binom{q + 1}{2}$	&$\binom{q}{2}$\\
		
		\hline
		
		\textbf{S}: $q - 1 \mid r$	&$\ell - 1$	&$0$	&$\frac{q + 1}{2}\cdot(\ell - 1)\cdot(m - 1)$	&$-\binom{q}{2}\cdot(\ell - 1)$\\
		
		\textbf{S}: $m \mid r$; $\ell \nmid r$	&$-1$	&$0$	&$-\frac{q + 1}{2}\cdot(m - 1)$	&$\binom{q}{2}$\\
		
		\textbf{S}: $\frac{q - 1}{2} \mid r$; $m \nmid r$	&$\ell - 1$	&$0$	&$-\frac{q + 1}{2}\cdot(\ell - 1)$	&$\frac{q - 1}{2}\cdot(\ell - 1)$\\
		
		\textbf{S}: $\frac{m}{2} \mid r$; $m, \ell \nmid r$	&$-1$	&$0$	&$\frac{q + 1}{2}$	&$-\frac{q - 1}{2}$\\
		
		\hline
		
		\textbf{P}: $m \mid r, s$; $\ell \mid u$	&$\ell - 1$	&$(q - 1)\cdot(\ell - 1)$	&$q\cdot(\ell - 1)\cdot(m - 1)$	&$0$\\
		
		\textbf{P}: $m \mid r, s$; $\ell \nmid u$	&$-1$	&$-(q - 1)$	&$-q\cdot(m - 1)$	&$0$\\
		
		\textbf{P}: $m \nmid r, s$; $q - 1 \mid u$	&$\ell - 1$	&$(q - 1)\cdot(\ell - 1)$	&$-q\cdot(\ell - 1)$	&$0$\\
		
		\textbf{P}: $m \nmid r, s$; $m \mid u$; $\ell \nmid u$	&$-1$	&$-(q - 1)$	&$q$	&$0$\\
		
		\hline
		
		\textbf{W}: $q - 1 \mid r$	&$\ell - 1$	&$-(q + 1)\cdot(\ell - 1)$	&$0$	&$q\cdot(\ell - 1)$\\
		
		\textbf{W}: $m \mid r$; $\ell \nmid r$	&$-1$	&$q + 1$	&$0$	&$-q$\\
		
		\bottomrule[0.1em]
	\end{tabular}\]
	
\end{lemma}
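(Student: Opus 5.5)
The table is obtained by substituting the values of Lemma \ref{sum1} into the preceding (unlabelled) lemma, which gives the eigenvalues of $\Cay(\GL_2(q), \fC_i)$ via Theorem \ref{babai}, and then dividing by $m$. So the plan is a bookkeeping argument, done column by column and row type by row type. For each irreducible character type (\textbf{O}, \textbf{S}, \textbf{P}, \textbf{W}), the preceding lemma writes the eigenvalue of $\Cay(\GL_2(q),\fC_i)$ as an explicit multiple of $\sigma_C$, $\sigma_D$, $\sigma_E^{(1)}$ or $\sigma_E^{(2)}$ evaluated at suitable arguments. I then read off the value of $m^{-1}\sigma_\square$ from Lemma \ref{sum1} under the stated divisibility case.

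\textbf{Columns $\Gamma_1$ and $\Gamma_2$.} The argument of $\sigma_C$ is $2r$ for \textbf{O} and \textbf{S}, $u=r+s$ for \textbf{P}, and $r$ for \textbf{W}. Since $\ell$ is odd, $\ell\mid 2r$ iff $\ell\mid r$, and $m\mid 2r$ iff $\frac m2\mid r$ when $m$ is even (here $m$ is even because $q$ is odd and $\ell$ is odd). Hence the four \textbf{O}/\textbf{S} cases collapse to the two cases of $\sigma_C$:
\begin{itemize}
\item $q-1\mid 2r$ (i.e.\ $\frac{q-1}{2}\mid r$) gives $\ell-1$;
\item $m\mid 2r$ with $\ell\nmid 2r$ gives $-1$.
\end{itemize}
Multiplying by $1$, $q^2-1$, $0$, $q-1$ or $-(q+1)$ as dictated by the earlier lemma yields these two columns.

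\textbf{Column $\Gamma_3$.} For \textbf{O} and \textbf{S} we need $\sigma_D(r,r)$, where $u=2r$. I translate the four cases of Lemma \ref{sum1} with $s=r$:
\begin{itemize}
\item $m\mid r$ and $\ell\mid 2r$ becomes $q-1\mid r$, giving $(\ell-1)(m-1)$;
\item $m\mid r$ and $\ell\nmid r$ gives $-(m-1)$;
\item $m\nmid r$ and $q-1\mid 2r$ becomes $\frac{q-1}{2}\mid r$, $m\nmid r$, giving $-(\ell-1)$;
\item $m\nmid r$, $m\mid 2r$, $\ell\nmid r$ gives $1$.
\end{itemize}
I then multiply by $q(q+1)/2=\binom{q+1}{2}$ for \textbf{O} and by $(q+1)/2$ for \textbf{S}. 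The \textbf{P} row uses $q\,\sigma_D(r,s)$ directly, and \textbf{W} is $0$.

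\textbf{Column $\Gamma_4$.} For \textbf{O} and \textbf{S}, $\sigma_E^{(2)}(r)/m$ is multiplied by $\pm(q-1)/2$. For \textbf{O} this gives $\binom q2\cdot q(\ell-1)$, $-\binom q2 q$, $-\binom q2(\ell-1)$ and $\binom q2$. For \textbf{S} there is a sign flip, giving $-\binom q2(\ell-1)$, and so on. Care is needed because in the \textbf{S} row the factor is $(q-1)/2$ rather than $\binom q2$: the first two entries carry the extra $q$ from $\sigma_E^{(2)}$, which turns $(q-1)/2\cdot q$ into $\binom q2$. For \textbf{W} we use $-q\,\sigma_E^{(1)}(r)/m$.

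\textbf{Main obstacle.} The only real work is verifying that the case partitions match exactly, i.e.\ that the divisibility conditions on $2r$ (or $u$) translate correctly into the stated conditions on $r$ using $\ell$ odd, $\ell\nmid m$ and $2\mid m$. The \textbf{S}-row entries of $\Gamma_4$ also need a separate check for these $q$-factor cancellations. All remaining cases give eigenvalue $0$, which the table ignores, consistent with Lemma \ref{sum1}.
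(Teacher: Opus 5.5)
Your proposal is correct and follows essentially the same route as the paper, which gives no separate proof: you substitute the case values of Lemma \ref{sum1} into the preceding Theorem \ref{babai} eigenvalue table, divide by $m$, and use that $\ell$ is odd, $\ell\nmid m$ and $m$ is even to turn the conditions on $2r$ into the stated conditions on $r$. One small slip: in the \textbf{O} row of $\Gamma_4$ the multiplier of $m^{-1}\sigma_E^{(2)}(r)$ is $q(q-1)/2=\binom{q}{2}$, not $(q-1)/2$ (only the \textbf{S} row has $-(q-1)/2$), but the entries you actually write down already use $\binom{q}{2}$ and are correct.
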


\begin{remark}
	
	Note that the odd-numbered rows in the table above are $-(\ell - 1)$ times the next row.	
	
\end{remark}

\subsection{Run the linear programming}
Extracting the even-numbered rows from the table in Lemma \ref{eigenvalue} and multiplying them by $-1$ yields the following matrix. 
\[\mathcal{L} := \begin{pmatrix}
	1	&q^2 - 1	&\binom{q + 1}{2}\cdot(m - 1)	&\binom{q}{2}\cdot q\\
	
	1	&q^2 - 1	&-\binom{q + 1}{2}	&-\binom{q}{2}\\
	
	1	&0	&\pt{q + 1}\cdot(m - 1)/2	&-\binom{q}{2}\\
	
	1	&0	&-\pt{q + 1}/{2}	&\pt{q - 1}/{2}\\
	
	1	&q - 1	&q\cdot(m - 1)	&0\\
	
	1	&q - 1	&-q	&0\\
	
	1	&-(q + 1)	&0	&q
\end{pmatrix} \in \BR^{7 \times 4}.\]
In other words, after an rearrangement of the rows in the table in Lemma \ref{eigenvalue}, the result table becomes $\begin{pmatrix}
	(\ell - 1)\cdot\mathcal{L}\\-\mathcal{L}
\end{pmatrix}$.

Now we run a linear programming to find a weighting $w = \pt{2\cdot m \cdot \abs{\GL_2(q)}}^{-1}\cdot\pt{w_1, w_2, w_3, w_4}^\mathtt{T} \in \BR^{4 \times 1}$ where
\begin{itemize}
	\item $w_1 = -(q-1)\cdot \left(m^2 \cdot \left(2 \cdot q^2+2 \cdot q+1\right)-2 \cdot m-q^2+1\right)$;
	
	\item $w_2 = (m+q-1)^2$;
	
	\item $w_3 = 2\cdot(q-1)\cdot (m+q-1)$;
	
	\item $w_4 = 2\cdot m\cdot(m+q-1)$.
\end{itemize}
We check that
\[\mathcal{L}\cdot w = \begin{pmatrix}
	1 & \frac{-1}{\ell} & \frac{-1}{\ell} & \frac{-1}{\ell} & \frac{-(q - \ell)}{(q + 1)\cdot\ell} & \frac{-1}{\ell} & \frac{-1}{\ell}
\end{pmatrix}^{\mathtt{T}} \in \BR^{7\times1}\]

It is easy to see that $\frac{(q - \ell)\cdot(\ell - 1)}{(q + 1)\cdot\ell} < 1$. Let $G = \GL_2(q)$ and $$A_G := m^{-1}\cdot\sum\limits_{i = 1}^{4}w_i\cdot\Cay(G, \fC_i).$$ Let $\theta_1, \theta_2, \ldots, \theta_{\abs{G}}$ be eigenvalues of $A_G$ ordered by descending absolute value. By Theorem \ref{babai} and the calculation above, we have $\theta_1 = \ell - 1$ with multiplicity one, thus $\theta_2 = -1$. Now the Hoffman's ratio bound (Theorem \ref{Hoffman}) on this weighted adjacency matrix $A_G$ gives
\[\sqrt{\abs{S_1}\cdot\abs{S_2}} \ls \frac{\abs{-1}}{(\ell - 1) + \abs{-1}}\cdot\abs{\GL_2(q)} = \abs{\GL_2(q)}/\ell,\]
where $S_1, S_2 \subseteq \GL_2(q)$ are cross-$1$-intersecting, and the proof is complete.

\newpage
\bibliographystyle{abbrv}
\nocite{*}
\bibliography{EPS_refs}
	
\end{document}